\newtheorem{theorem}{Theorem}[section]
\newtheorem{lemma}[theorem]{Lemma}
\newtheorem{proposition}[theorem]{Proposition}
\newtheorem{corollary}[theorem]{Corollary}
\newtheorem{claim}[theorem]{Claim}
\newtheorem{conjecture}[theorem]{Conjecture}
\newtheorem{remark}[theorem]{Remark}
\theoremstyle{definition}
\newtheorem*{thm:twist_T}{Theorem~\ref{twist_T}}
\newtheorem*{cor:g-torsion_large_genus}{Corollary~\ref{g-torsion_large_genus}}
\newtheorem*{cor:pretzel}{Corollary~\ref{pretzelknot}}
\numberwithin{equation}{section}
\numberwithin{figure}{section}
\numberwithin{table}{section}
\renewcommand{\(}{\textup{(}}
\renewcommand{\)}{\textup{)}}
\begin{document}
\baselineskip 12pt

\title{Generalized torsion for knots with arbitrarily high genus}

\author[K. Motegi]{Kimihiko Motegi}
\address{Department of Mathematics, Nihon University, 
3-25-40 Sakurajosui, Setagaya-ku, 
Tokyo 156--8550, Japan}
\email{motegi.kimihiko@nihon-u.ac.jp}

\author[M. Teragaito]{Masakazu Teragaito}
\address{Department of Mathematics and Mathematics Education, Hiroshima University, 
1-1-1 Kagamiyama, Higashi-Hiroshima, 739--8524, Japan}
\email{teragai@hiroshima-u.ac.jp}

\dedicatory{Dedicated to the memory of Toshie Takata}

\begin{abstract}
Let $G$ be a group and let  $g$ be a non-trivial element in $G$. 
If some non-empty finite product of conjugates of $g$ equals to the identity,
then $g$ is called a \textit{generalized torsion element}. 
We say that a knot $K$ has generalized torsion if $G(K) = \pi_1(S^3 - K)$ admits such an element. 
For  a $(2, 2q+1)$--torus knot $K$, 
we demonstrate that there are infinitely many unknots $c_n$ in $S^3$ such that  $p$--twisting $K$
about $c_n$ yields a twist family $\{ K_{q, n, p}\}_{p \in \mathbb{Z}}$ in which $K_{q, n, p}$ is a hyperbolic knot 
with generalized torsion whenever $|p| > 3$. 
This gives a new infinite class of  hyperbolic knots having generalized torsion. 
In particular, each class contains knots with arbitrarily high genus. 
We also show that some twisted torus knots, including the $(-2, 3, 7)$--pretzel knot, 
have generalized torsion.  
Since generalized torsion is an obstruction for having bi-order, 
these knots have non-bi-orderable knot groups. 
\end{abstract}

\maketitle

{
\renewcommand{\thefootnote}{}
\footnotetext{2020 \textit{Mathematics Subject Classification.}
Primary 57K10, 57M05, 57M50
\footnotetext{ \textit{Key words and phrases.}
fundamental group, Dehn filling, slope, generalized torsion}
}

\section{Introduction}
\label{Introduction}
Let $G$ be a group and let $g$ be a non-trivial element in $G$. 
If some non-empty finite product of conjugates of $g$ equals to the identity, 
then $g$ is called a \textit{generalized torsion element}.
In particular, any non-trivial torsion element is a generalized torsion element. 

A group $G$ is said to be \textit{bi-orderable\/} if $G$ admits
a strict total ordering $<$ which is invariant under multiplication from
the left and right.
That is, if $g<h$, then $agb<ahb$ for any $g,h,a,b\in G$.
In this paper, the trivial group $\{1\}$ is considered to be bi-orderable.

It is easy to see that a bi-orderable group does not have a generalized torsion element. 
Thus the existence of generalized torsion element is an obstruction for 
a group to be bi-orderable.
It is known that the converse does not hold in general \cite[Chapter 4]{MR}. 
However, among $3$--manifold groups (fundamental groups of $3$--manifolds), 
one may expect that the converse does hold, 
and the authors proposed the following conjecture 
\cite{MT_generalized_torsion}. 

\begin{conjecture}
\label{conj:bo}
Let $G$ be the fundamental group of a $3$--manifold. 
Then,
$G$ is bi-orderable if and only if
$G$ has no generalized torsion element.
\end{conjecture}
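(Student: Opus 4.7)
The forward direction is standard and I would dispatch it first: if $G$ is bi-orderable with ordering $<$ and $g\neq 1$, then either $g>1$ or $g<1$; in either case every conjugate $aga^{-1}$ has the same sign as $g$ (since bi-invariance implies conjugation preserves the sign of an element), and so any non-empty finite product of conjugates is strictly on the same side of $1$, hence non-trivial. Thus no generalized torsion element can exist.

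The converse is the substantive and genuinely open content, so my plan is structural rather than computational: reduce to geometric pieces via the decomposition theorems of $3$--manifold topology, then handle each piece with the tools appropriate to its geometry. First, using the prime decomposition $M = M_1 \# \cdots \# M_k$, I would write $G = G_1 * \cdots * G_k$ and apply Vinogradov's theorem that a free product of bi-orderable groups is bi-orderable, together with the fact that a generalized torsion element in a free product must already live in a conjugate of a factor (via the normal form), thereby reducing to the irreducible case. Next, if $M$ is irreducible I would invoke the JSJ decomposition, so that $G$ is built by amalgamation and HNN extensions from Seifert fibered and hyperbolic pieces along tori subgroups $\mathbb{Z}^2$.

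For the pieces themselves, Seifert fibered manifold groups are well understood through the work of Boyer--Rolfsen--Wiest and Perron--Rolfsen: their bi-orderability reduces to homological conditions and to properties of the base orbifold, and the obstruction is precisely (in the known cases) visible via generalized torsion, so the conjecture should be verified by matching the known obstructions term by term. For hyperbolic pieces the most promising inroad is the Perron--Rolfsen criterion for fibered manifolds: if the monodromy of a fibered hyperbolic $3$--manifold has only real positive eigenvalues on $H_1$ of the fiber, the group is bi-orderable. The plan here would be to show, under the hypothesis of no generalized torsion, that one can produce such a fibration or a similar geometric certificate; conversely, when the criterion fails, to exhibit an explicit generalized torsion element (as the present paper does for specific families of knots).

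The decisive obstacle, and the step I expect to be completely out of reach by the methods sketched, is the gluing. Bi-orderability behaves badly under amalgamated products: even when two factors are bi-orderable and the amalgamating subgroup is central in each, the amalgam can fail to be bi-orderable, and conversely a generalized torsion element in the amalgam need not be visible in any single vertex group. Thus the conjecture's content is essentially a rigidity statement saying that for peripheral-$\mathbb{Z}^2$ gluings arising from JSJ, the failure of bi-orderability of the amalgam is always witnessed by a generalized torsion element that can be constructed from compatible orderings on the pieces (or their failure). A complete proof would require a uniform mechanism, perhaps a \emph{relative} bi-ordering theory on the pieces with prescribed behaviour on peripheral tori, together with an algorithm that converts incompatibilities of peripheral orderings into explicit products of conjugates equaling the identity. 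Designing that mechanism is the crux, and I would expect incremental progress only by first establishing the conjecture for graph manifolds, then for hyperbolic knot exteriors (of which the present twist families furnish rich test cases), before attempting the general case.
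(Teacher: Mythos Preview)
The statement you have attempted is labeled \emph{Conjecture} in the paper, and the paper does not prove it; it is proposed as an open problem and the remainder of the paper supplies supporting evidence (families of knot and link groups containing generalized torsion). So there is no ``paper's own proof'' to compare against. Your forward direction is the standard argument and is correct. Your treatment of the converse is, as you yourself concede, a research outline rather than a proof: the reduction to prime pieces via Vinogradov is fine, but the JSJ step is exactly where the problem is open, and nothing in your sketch closes the gap---the Perron--Rolfsen fibered criterion is only sufficient, not necessary, and there is no known mechanism for producing a generalized torsion element from the failure of bi-orderability in a hyperbolic piece, let alone for controlling what happens under toral amalgamation. In short, the conjecture remains open, the paper does not claim otherwise, and your proposal should be read as a plausible program rather than a proof.
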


Let us restrict our attention to a knot group $G(K)$, 
the fundamental group of the exterior $E(K)$ of a knot $K$ in $S^3$. 
Then  $G(K)$ is known to be left-orderable, 
i.e. it has a a strict total ordering $<$ which is invariant under multiplication from
the left \cite{HW,BRW}. 
On the other hand, 
little is known for having generalized torsion elements or being bi-orderable. 

In what follows, for short, 
we often say that $K$ has generalized torsion if its knot group $G(K)$ has a generalized torsion element. 
Any nontrivial torus knot has generalized torsion \cite{NR}, 
and hence any satellite of a nontrivial torus knot also has generalized torsion. 

Using this simple fact, 
we may observe the following in ad hoc fashion. 

\begin{proposition}
\label{satellite}
For a given knot $K$, 
there are infinitely many twisting circles $c$ such that $K_p$, 
the knot obtained from $K$ by $p$ twisting about $c$,  
has generalized torsion. 
\end{proposition}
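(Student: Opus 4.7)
The plan is to construct, for each $n \in \mathbb{N}$, a twisting circle $c_n \subset S^3 \setminus K$ and an integer $p_n$ such that $K_{p_n}$ is a satellite of a nontrivial torus knot. The result then follows immediately from the fact, recalled just above, that every satellite of a nontrivial torus knot has generalized torsion.

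For each $n$, I would fix a distinct nontrivial torus knot $T_n$ (for instance, $T_n = T(2, 2n+1)$) and arrange the twist $K_{p_n}$ to equal the connected sum $K \# T_n$. Since $K \# T_n$ is a satellite of $T_n$---with pattern given by the standard connect-sum pattern inside a tubular neighborhood of $T_n$---it carries generalized torsion. Concretely, I would place $T_n$ inside a small $3$-ball $B_n \subset S^3 \setminus K$, choose $c_n$ to be an unknot supported near a band $\gamma_n$ connecting a chosen point of $K$ to $B_n$, and identify an integer $p_n$ so that $p_n$-twisting $K$ about $c_n$ realizes the band-sum along $\gamma_n$, producing $K \# T_n$. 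Distinct choices of $T_n$ yield distinct $K \# T_n$ (by uniqueness of prime decomposition), and hence pairwise non-isotopic twisting circles $c_n$, giving infinitely many.

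The main obstacle is to verify rigorously that a single twist realizes the desired satellite. A $p_n$-twist is a self-homeomorphism of $S^3$, so it cannot fuse disjoint link components; the construction must therefore produce $K \# T_n$ directly from $K$ (viewed as a single component) by the action of the twist homeomorphism. I expect this to be handled by a Kirby-calculus style diagrammatic argument, positioning $c_n$ so that its bounded disk meets $K$ in a pattern that, after the twist, is isotopic to the attached band. An alternative strategy, which sidesteps the specific connect-sum formulation, is to choose $c_n$ so that the exterior $E(K \cup c_n)$ contains an essential torus bounding a nontrivial torus knot exterior on one side; for generic $p_n$ this torus remains essential in $E(K_{p_n})$ by Thurston's hyperbolic Dehn surgery considerations, exhibiting $K_{p_n}$ directly as a satellite of a torus knot and delivering the required generalized torsion.
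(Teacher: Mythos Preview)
Your proposal does not complete a proof. The target is right---arrange that $K_p$ is a satellite of a nontrivial torus knot---but neither of your two routes is carried through. The primary construction cannot work as written: you place $T_n$ in a ball \emph{disjoint} from $K$ and hope a twist along some $c_n$ will band-sum them. As you yourself observe, a twist along $c_n$ is a self-homeomorphism of $S^3$ and cannot merge components, so no $p_n$ produces $K\# T_n$ from that configuration. To rescue the connect-sum idea you would need an unknot $c$ whose twist ties a local copy of $T_n$ into a single arc of $K$; that is a nontrivial claim (the spanning disk of $c$ must meet $K$ in a carefully prescribed multi-point pattern), and you give no construction or reference for it. Your alternative---choose $c_n$ so that $E(K\cup c_n)$ already contains an essential torus bounding a torus-knot exterior---is closer in spirit to something workable, but as stated it is only a wish: you do not say how to produce such a $c_n$.

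The paper's argument supplies exactly the missing construction, and it is short. Choose any unknotted solid torus $V\subset S^3$ containing $K$ in its interior, with $K$ neither a core of $V$ nor contained in a $3$--ball in $V$ (there are infinitely many such $V$). Let $c$ be the curve on $\partial V$ winding $m\ge 2$ times meridionally and once longitudinally. A $p$--twist about $c$ carries the core of $V$ to the torus knot $T_{m,\,pm+1}$, so $K_p$ is a satellite with that companion for every $p\ne 0$, and $\pi_1(E(T_{m,pm+1}))\hookrightarrow G(K_p)$ supplies the generalized torsion. Note that this yields the conclusion for \emph{all} nonzero $p$ once $c$ is fixed, matching how the proposition is used afterwards; your scheme aimed only at a single $p_n$ per circle.
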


\begin{proof}
Let us take an unknotted solid torus $V$ which contains $K$ in its interior so that 
$K$ is not a core of $V$ and not embedded in a $3$--ball in $V$.  
Note that there are infinitely many such solid tori. 
Then let $c$ be an unknotting circle on $\partial V$ which wraps $m\ (\ge 2)$ times in meridional direction and once in the longitudinal direction of $V$.  
If we perform $p$--twisting about $c$, 
then the core of $V$ becomes a $(m, pm+1)$--torus knot. 
Thus $K_p$ is a satellite knot which has the torus knot $T_{m, pm+1}$ as a companion knot. 
Recall that $\pi_1(E(T_{m, pm+1}))$ has a generalized torsion element; see \cite{NR} (cf. \cite{MT_generalized_torsion}). 
Since $\pi_1(E(T_{m, pm+1}))$ injects into $\pi_1(E(K_p))$, 
$\pi_1(E(K_p))$ also has a generalized torsion. 
\end{proof}

By construction, the knots given in Proposition~\ref{satellite} are satellite knots. 
Turning to hyperbolic knots,  
Naylor and Rolfsen \cite{NR} discovered a generalized torsion element 
in the knot group of the hyperbolic knot $5_2$ by using a computer.
It is surprising that this is the first example of hyperbolic knot with generalized torsion. 
The knot $5_2$ is the  $(-2)$-twist knot, 
and  the second named author extends this example to all negative twist knots \cite{Te} . 
As far as we know, 
these twist knots are the only known hyperbolic knots with generalized torsion.   
We emphasize that all twist knots have genus one. 

In this article, applying twisting operation, we demonstrate: 

\begin{theorem}
\label{twist_T}
Let $K_q$ be a $(2, 2q+1)$--torus knot $T_{2, 2q+1}$ \($q \ge 1$\). 
Then there are infinitely many unknots $c_n$ $(n \ge 1)$ in $S^3$ disjoint from $K_q$ such that 
each $c_n$ enjoys the following property. 
Let $K_{q, n, p}$ be a knot obtained from $K_q$ by $p$--twisting about $c_n$. 
Then for each infinite family $\{ K_{q, n, p}\}_{p \in \mathbb{Z}}$,  
$K_{q, n, p}$ is a hyperbolic knot with generalized torsion whenever $|p| > 3$. 
\end{theorem}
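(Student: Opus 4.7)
The plan is to realise twisting as Dehn filling on the link exterior $E(K_q\cup c_n)$, so that $E(K_q)$ and $E(K_{q,n,p})$ both arise as fillings of the $c_n$-cusp of the same hyperbolic manifold, and then transport a known generalized torsion element of the torus knot group into $\pi_1(E(K_{q,n,p}))$ via this filling, while separately verifying hyperbolicity by standard Dehn surgery bounds.

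For each $n\ge 1$ I would construct $c_n$ explicitly as an unknot in $S^3\setminus K_q$ whose bounding disk $D_n$ meets $K_q$ in a pattern growing in $n$, and arrange two geometric compatibility conditions: the two-component link $K_q\cup c_n$ is hyperbolic (a standard atoroidality / anannularity check on the explicit link), and the null-homotopy in $E(K_q)$ witnessing the Naylor--Rolfsen identity $\prod_i h_i\gamma h_i^{-1}=1$ for an explicit $\gamma\in\pi_1(E(K_q))$ can be realised geometrically disjoint from $D_n$. The growth of $|D_n\cap K_q|$ with $n$ is what forces the Seifert genus of the twist knots $K_{q,n,p}$ to tend to infinity with $n$ for any fixed $p$, supplying the arbitrarily-high-genus conclusion promoted in the introduction. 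Once the second compatibility condition is met, the identity lifts to a genuine relation already in $\pi_1(E(K_q\cup c_n))$, not merely modulo the normal closure of the meridian of $c_n$, and therefore descends to every Dehn filling on $c_n$: pushing it forward along the surjection induced by the $(-1/p)$-filling yields a candidate $\gamma_{n,p}\in\pi_1(E(K_{q,n,p}))$ satisfying the same product-of-conjugates identity.

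Hyperbolicity of $K_{q,n,p}$ for $|p|>3$ then follows from Thurston's hyperbolic Dehn filling theorem combined with the Agol--Lackenby $6$-theorem applied at the $c_n$-cusp, together with a short enumeration ruling out the handful of small exceptional surgeries; the explicit threshold $|p|>3$ should emerge from the slope-length estimate together with this enumeration. The genuine technical obstacle is to show $\gamma_{n,p}\neq 1$ in $\pi_1(E(K_{q,n,p}))$ for all $|p|>3$, which is what upgrades the surviving relation from a tautology to a bona fide generalized torsion relation. I would attack this either by constructing an explicit finite quotient of $\pi_1(E(K_q\cup c_n))$ that detects $\tilde\gamma$ and survives the $(-1/p)$-filling on $c_n$, or by analysing the $\mathrm{PSL}_2(\mathbb{C})$ holonomy representation of the hyperbolic knot $K_{q,n,p}$ and computing the image of $\gamma_{n,p}$ directly as a non-trivial matrix, with the threshold $|p|>3$ corresponding to the range in which this non-triviality argument first applies.
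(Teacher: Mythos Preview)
Your overall architecture---realise $E(K_q)$ and $E(K_{q,n,p})$ as fillings of the hyperbolic link exterior $E(K_q\cup c_n)$, produce a generalized torsion relation already in the link group, and push it down along the filling surjection---matches the paper exactly. Where your proposal diverges from the paper, and where the real gaps lie, is in \emph{how} the relation is produced in $\pi_1(E(K_q\cup c_n))$ and \emph{how} its image is shown to be nontrivial after filling.

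The paper does not lift the torus-knot generalized torsion element at all. Instead, the $c_n$ are chosen so that $K_q\cup c_n$ has tunnel number one, and an explicit isotopy of the unknotting tunnel yields a one-relator presentation
\[
\pi_1(S^3-K_q\cup c_n)=\langle a,b\mid [\,b,\ (ab)^{q}a^{n+2}(ba)^{q}\,]=1\rangle.
\]
Because the relator is $[b,w(a,b)]$ with $w$ a \emph{positive} word in $a,b$, Proposition~\ref{w_x} decomposes it as a product of conjugates of $[b,a]$; this is the entire mechanism. Your plan to arrange the Naylor--Rolfsen null-homotopy in $E(K_q)$ disjoint from the disk $D_n$ is a genuine geometric constraint you have not verified, and it has to be satisfied simultaneously with hyperbolicity of the link and with the linking number growing in~$n$. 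That may be feasible, but it is an extra layer the paper simply avoids by reading the relation off the Heegaard diagram.

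For nontriviality after filling, the paper's argument is one line and does not need finite quotients or holonomy: since $a,b$ generate $G(K_{q,n,p})$, if $[b,a]=1$ then the group is abelian, so $K_{q,n,p}$ is the unknot; but then $(-\tfrac{1}{p})$-surgery on $c_n$ inside the solid torus $S^3\setminus\mathrm{int}\,N(K_{q,n,p})$ produces the Seifert fibered torus-knot exterior from a hyperbolic cusped manifold, which is forbidden for $|p|\ne 1$ by \cite[Theorem~1.2]{MM3}. Your proposed detection via $\mathrm{PSL}_2(\mathbb{C})$ or finite quotients would have to be carried out uniformly in $p$ and $n$, and you give no indication of how to do this or why the threshold $|p|>3$ would emerge. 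Likewise, hyperbolicity of $K_{q,n,p}$ for $|p|>3$ is not obtained from the $6$-theorem but from the ready-made \cite[Proposition~5.11]{DMM}, which uses that the trivial filling on $c_n$ already gives a nontrivial torus knot exterior; invoking the $6$-theorem directly would require a cusp-geometry computation you have not supplied.

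In summary: the transport-by-Dehn-filling skeleton is right, but both substantive steps---producing the relation in the link group, and proving its image nontrivial---are handled in the paper by a specific structural feature (the tunnel-number-one positive-word relator and the ``abelian $\Rightarrow$ unknot'' trick) that your proposal does not anticipate, and your substitutes for these steps are not yet arguments.
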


Since the linking number between $c_n$ and $K_q$ is greater than one, 
\cite[Theorem~2.1]{BM} shows that the genus of $K_{q, n, p}$ tends to $\infty$ as $|p| \to \infty$.

\begin{corollary}
\label{g-torsion_large_genus}
There are infinitely many hyperbolic knots with arbitrarily high genus, each of which has generalized torsion. 
\end{corollary}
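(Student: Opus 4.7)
The plan is to derive the corollary as a direct consequence of Theorem~\ref{twist_T} together with the genus-growth remark immediately preceding the statement. First I fix any $q \ge 1$, say $q = 1$, and any single unknot $c_n$ supplied by Theorem~\ref{twist_T}; this produces a twist family $\{K_{q,n,p}\}_{p \in \mathbb{Z}}$ in which every member with $|p| > 3$ is a hyperbolic knot whose knot group carries a generalized torsion element.

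Next I invoke the cited genus bound \cite[Theorem~2.1]{BM}: because the linking number of $c_n$ with $K_q$ is greater than one, the genus $g(K_{q,n,p})$ tends to infinity as $|p| \to \infty$. Consequently, for any prescribed $N \in \mathbb{N}$, only finitely many integers $p$ can satisfy $g(K_{q,n,p}) \le N$, and the subset $\{K_{q,n,p} : |p| > 3,\ g(K_{q,n,p}) > N\}$ is therefore infinite. Since knots with distinct genera are distinct, this subset consists of infinitely many pairwise distinct hyperbolic knots of genus greater than $N$, each with generalized torsion, and letting $N$ vary shows that the family realises arbitrarily high genus.

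The only point that requires any care is the passage from "the genus tends to infinity along the twist parameter $p$" to "infinitely many pairwise distinct hyperbolic knots of unbounded genus," but this is immediate once one observes that a uniform genus bound would confine $p$ to a finite set. Hence there is no substantive obstacle: the corollary follows formally from Theorem~\ref{twist_T} and the linking-number hypothesis that feeds \cite[Theorem~2.1]{BM}.
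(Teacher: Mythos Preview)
Your argument is correct and follows essentially the same route as the paper: fix a twist family from Theorem~\ref{twist_T}, use hyperbolicity and generalized torsion for $|p|>3$, and invoke \cite[Theorem~2.1]{BM} via the linking-number condition to see that the genus is unbounded. You spell out a bit more explicitly than the paper why unbounded genus yields infinitely many distinct knots, but there is no substantive difference in strategy.
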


Furthermore, 
we will show that some twisted torus knots, 
including the $(-2, 3, 7)$--pretzel knot, 
have generalized torsion. 
This implies the following.

\begin{corollary}
\label{pretzelknot}
The knot group of the pretzel knot of type $(-2,3,2s+5)$ is not bi-orderable for $s\ge 0$.
\end{corollary}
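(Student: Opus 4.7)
The plan is to deduce Corollary~\ref{pretzelknot} from the (forthcoming) theorem in the paper asserting that certain twisted torus knots carry generalized torsion, combined with the observation recorded in the introduction that a generalized torsion element is an obstruction to bi-orderability. The real work is to place each pretzel knot of type $(-2,3,2s+5)$ inside the twisted torus knot family whose knot groups have been shown to contain such an element.

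First I would identify the pretzel family as a twist family of a torus knot. Choose an unknotted circle $c$ bounding a disk that meets the third tangle of the pretzel transversely in two points of opposite sign. Then $s$-twisting about $c$ converts the $(-2,3,5)$-pretzel into the $(-2,3,2s+5)$-pretzel. Since the $(-2,3,5)$-pretzel is isotopic to the $(3,5)$-torus knot $T(3,5)$, the entire family $\{(-2,3,2s+5)\text{-pretzels}\}_{s\ge 0}$ is exactly the twist family obtained from $T(3,5)$ about $c$. This realizes each member as a twisted torus knot in the sense of the paper, recovering the $(-2,3,7)$-pretzel at $s=1$.

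Next I would invoke the theorem on generalized torsion for this twisted torus knot family to conclude that the knot group contains a generalized torsion element for every $s\ge 0$. The case $s=0$ is already covered by \cite{NR} since $T(3,5)$ is a torus knot. Because a bi-orderable group admits no generalized torsion element, the knot group of the $(-2,3,2s+5)$-pretzel cannot be bi-ordered, proving the corollary.

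The main obstacle I expect is verifying that the explicit word in meridians producing generalized torsion for the $(-2,3,7)$-pretzel admits a uniform deformation along the whole twist family rather than working only at the single value $s=1$: in practice this reduces to checking how the twisting relation at $c$ interacts with the candidate word, together with the bookkeeping that matches the pretzel parameter $2s+5$ with the twist parameter used in the twisted torus knot theorem. Once that uniformity is secured, the non-bi-orderability conclusion follows instantly from the obstruction principle stated in the introduction.
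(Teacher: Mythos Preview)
Your plan is exactly the paper's: identify the $(-2,3,2s+5)$--pretzel with the twisted torus knot $K(5,3;2,s)$, invoke Theorem~\ref{thm:532s} to produce a generalized torsion element for every $s\ge 0$ (so no separate appeal to \cite{NR} at $s=0$ is needed), and conclude non-bi-orderability from the obstruction recalled in the introduction. One small slip worth correcting: the two strands of the third tangle pass through the twisting disk with the \emph{same} sign (linking number $\pm 2$ with $c$), not opposite sign---this is what makes the family coincide with the $K(5,3;2,s)$ construction and what forces the genus $s+4$ to grow.
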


\medskip
\section{Decomposition of commutators}

We prepare a few useful facts which will be exploited to identify a generalized torsion element. 
Throughout this paper, 
$[x, y]$ denotes the commutator $x^{-1}y^{-1}xy$, and $x^g = g^{-1}x g$ in a group $G$.

Recall the well-known commutator identity which holds in a group: 

\begin{lemma}
\label{identity}
$[x, yz] = [x, z] [x, y]^z$.
\end{lemma}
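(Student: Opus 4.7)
The plan is to verify the identity by direct calculation, expanding both sides using the two definitions supplied just before the lemma: $[x,y] = x^{-1} y^{-1} x y$ and $x^g = g^{-1} x g$. There is no conceptual obstacle here; the identity is a standard commutator manipulation and the only task is to check that the two strings of generators simplify to the same word in the free product on $x, y, z$.

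Concretely, I would first unfold the left-hand side:
\[
[x, yz] = x^{-1} (yz)^{-1} x (yz) = x^{-1} z^{-1} y^{-1} x y z.
\]
Next I would unfold the right-hand side piece by piece. By definition $[x, z] = x^{-1} z^{-1} x z$, and
\[
[x, y]^z = z^{-1} [x, y] z = z^{-1} x^{-1} y^{-1} x y \, z.
\]
Multiplying and cancelling the adjacent $z \cdot z^{-1}$ gives
\[
[x, z] [x, y]^z = x^{-1} z^{-1} x z \cdot z^{-1} x^{-1} y^{-1} x y z = x^{-1} z^{-1} x \cdot x^{-1} y^{-1} x y z,
\]
and a second cancellation of $x \cdot x^{-1}$ yields $x^{-1} z^{-1} y^{-1} x y z$, which agrees with the left-hand side.

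Since the identity holds as a word identity in the free group on $\{x, y, z\}$, it holds in every group $G$ once $x, y, z$ are interpreted as elements of $G$. The main (and essentially only) thing to watch for is the ordering convention for commutators and conjugation — the paper fixes $[x,y] = x^{-1} y^{-1} x y$ and $x^g = g^{-1} x g$, and the identity as stated is correct only with these conventions, so I would mention this explicitly to avoid any confusion with the alternative convention $[x,y] = xyx^{-1}y^{-1}$, under which the identity would take a slightly different form.
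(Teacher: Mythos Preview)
Your proof is correct and is essentially the same direct computation as the paper's: the paper simply writes $[x,yz] = x^{-1}(yz)^{-1}x(yz) = (x^{-1}z^{-1}xz)\,z^{-1}(x^{-1}y^{-1}xy)\,z = [x,z][x,y]^z$ in one line, inserting the same $zz^{-1}$ and $xx^{-1}$ cancellations you perform in reverse. The extra remark about conventions is a nice touch but not needed for the paper's purposes.
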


\begin{proof}
$[x, yz] 
= x^{-1}(yz)^{-1}x (yz) 
=(x^{-1}z^{-1}xz)z^{-1}(x^{-1}y^{-1}xy)z
= [x, z][x, y]^z$.
\end{proof}

\medskip

\begin{lemma}
\label{w}
In a group, let $w(a^{\varepsilon_a}, b^{\varepsilon_b})$ $(\varepsilon_a$ and $\varepsilon_b$ are either $1$ or $-1)$
be any word in which only $a^{\varepsilon_a}$ and $b^{\varepsilon_b}$ appear\textup{;} 
neither $a^{-\varepsilon_a}$ nor $b^{-\varepsilon_b}$ appears. 
Then 
the commutator $[x,w(a^{\varepsilon_a}, b^{\varepsilon_b})]$ can be decomposed into a product of conjugates of $[x, a^{\varepsilon_a}]$ and $[x, b^{\varepsilon_b}]$. 
\end{lemma}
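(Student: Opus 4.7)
The plan is to prove the lemma by induction on the word length $\ell(w)$ of $w$ viewed as an unreduced word in the two symbols $a^{\varepsilon_a}$ and $b^{\varepsilon_b}$. The base case $\ell(w) = 1$ is immediate, since then $w$ is either $a^{\varepsilon_a}$ or $b^{\varepsilon_b}$, and $[x,w]$ is itself in the desired form (as a trivial conjugate of one of the two generating commutators).

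For the inductive step, write $w = y z$ where $y \in \{a^{\varepsilon_a}, b^{\varepsilon_b}\}$ is the first letter of $w$ and $z$ is the remaining (shorter) subword, which still uses only $a^{\varepsilon_a}$ and $b^{\varepsilon_b}$. Applying Lemma~\ref{identity} yields
\[
[x, w] \;=\; [x, yz] \;=\; [x, z]\,[x, y]^{z}.
\]
The hypothesis forces $[x, y]$ to be exactly one of $[x, a^{\varepsilon_a}]$ or $[x, b^{\varepsilon_b}]$, so $[x, y]^{z}$ is a single conjugate of one of these two. The induction hypothesis applied to the shorter word $z$ expresses $[x, z]$ as a product of conjugates of $[x, a^{\varepsilon_a}]$ and $[x, b^{\varepsilon_b}]$; concatenating with $[x, y]^{z}$ yields the required decomposition of $[x, w]$.

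The only subtlety worth flagging is the role of the sign hypothesis: because $a^{-\varepsilon_a}$ and $b^{-\varepsilon_b}$ are forbidden in $w$, the leading letter $y$ at each inductive step is guaranteed to be one of the two allowed generators themselves rather than an inverse, so no term of the form $[x, a^{-\varepsilon_a}]$ ever arises in the decomposition. Apart from this bookkeeping point, the argument is entirely mechanical; the one identity to keep in mind is that a conjugate of a conjugate is again a conjugate, via $(g^h)^k = g^{hk}$. I do not anticipate any genuine obstacle.
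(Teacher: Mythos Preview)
Your argument is correct and follows the same route as the paper's own proof: induction on the length of $w$, peeling off the first letter and applying the identity $[x,yz]=[x,z][x,y]^{z}$ from Lemma~\ref{identity} at each step. The only cosmetic difference is that the paper normalizes to $\varepsilon_a=\varepsilon_b=1$ up front, whereas you carry the signs through; your added remarks on the sign hypothesis and on $(g^{h})^{k}=g^{hk}$ are sound but not needed beyond what the paper does.
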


\begin{proof}
The proof is done by the induction on the length of $w(a^{\varepsilon_a}, b^{\varepsilon_b})$. 
For simplicity, we assume $\varepsilon_a = \varepsilon_b = 1$. 
The other cases are similar.

If the length of $w(a, b)$ is one, 
then $w(a, b) = a$ or $b$ by the assumption and $[x, w(a, b)]$ is nothing but $[x, a]$ or $[x, b]$. 

Assume that for any word $w'(a, b)$ with length $n-1$, 
$[x, w'(a, b)]$ can be written as a product of conjugates of $[x, a]$ and $[x, b]$. 
Then we show that the same is true for $[x, w(a, b)]$ for $w(a, b)$ with length $n$. 
Here we suppose that the initial letter of $w(a, b)$ is $a$, i.e. $w(a, b) = aw'(a, b)$. 
Then $[x, w(a, b)] = [x, aw'(a, b)]$. 
Applying Lemma~\ref{identity}, 
we have $[x, aw'(a, b)] = [x, w'(a, b)][x, a]^{w'(a, b)}$. 
Since $w'(a, b)$ has length $n-1$, 
we may write $[x, w'(a, b)]$ as a product of conjugates of $[x, a]$ and $[x, b]$, completing the proof. 
\end{proof}

As a  particular case where $b = x$, 
since $[x, x] = [x, x^{-1}] = 1$, we have: 

\begin{proposition}
\label{w_x}
The commutator
$[x, w(a^{\varepsilon_a}, x^{\varepsilon_x})]$ can be decomposed into a product of conjugates of $[x, a^{\varepsilon_a}]$. 
\end{proposition}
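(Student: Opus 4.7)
The plan is to deduce Proposition~\ref{w_x} directly from Lemma~\ref{w} by substituting $b = x$ and $\varepsilon_b = \varepsilon_x$, and observing that the resulting commutators $[x, x^{\varepsilon_x}]$ collapse to the identity.

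First I would apply Lemma~\ref{w} with $a$ left as is and $b$ specialized to $x$, so that the word $w(a^{\varepsilon_a}, x^{\varepsilon_x})$ meets the hypotheses of that lemma (only $a^{\varepsilon_a}$ and $x^{\varepsilon_x}$ appear, not their inverses). The lemma then expresses $[x, w(a^{\varepsilon_a}, x^{\varepsilon_x})]$ as a product of conjugates of $[x, a^{\varepsilon_a}]$ and $[x, x^{\varepsilon_x}]$.

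Next I would eliminate the latter factors. Regardless of the sign $\varepsilon_x \in \{\pm 1\}$, we have $[x, x] = x^{-1}x^{-1}xx = 1$ and $[x, x^{-1}] = x^{-1}xxx^{-1} = 1$, so every conjugate of $[x, x^{\varepsilon_x}]$ is trivial and can be removed from the product. What remains is a product of conjugates of $[x, a^{\varepsilon_a}]$, which is exactly the desired conclusion.

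There is no real obstacle here: the statement is essentially an immediate corollary of Lemma~\ref{w}, and the only ingredient beyond that lemma is the trivial identity $[x, x^{\pm 1}] = 1$. The only care required is to note that this works uniformly in the sign $\varepsilon_x$, which is why the proposition is stated with $x^{\varepsilon_x}$ rather than just $x$.
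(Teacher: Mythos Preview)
Your proposal is correct and matches the paper's own argument essentially verbatim: the paper also derives Proposition~\ref{w_x} as the special case $b = x$ of Lemma~\ref{w}, noting that $[x,x] = [x,x^{-1}] = 1$ so that only conjugates of $[x,a^{\varepsilon_a}]$ survive.
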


\medskip

\section{Generalized torsion which arises from twisting torus knots $T_{2, 2q+1}$}
\label{trefoil}

The goal of this section is to prove: 

\begin{thm:twist_T}
Let $K_q$ be a $(2, 2q+1)$--torus knot $T_{2, 2q+1}$ \($q \ge 1$\). 
Then there are infinitely many unknots $c_n$ $(n \ge 1)$ in $S^3$ disjoint from $K_q$ such that 
each $c_n$ enjoys the following property. 
Let $K_{q, n, p}$ be a knot obtained from $K_q$ by $p$--twisting about $c_n$. 
Then for each infinite family $\{ K_{q, n, p}\}_{p \in \mathbb{Z}}$,  
$K_{q, n, p}$ is a hyperbolic knot with generalized torsion whenever $|p| > 3$. 
\end{thm:twist_T}

\begin{proof}
For integers $q,\ n \ge 1$ we consider the braid 
\[
(\sigma_1\sigma_2\cdots \sigma_{2q+n+1})(\sigma_1\sigma_2\cdots \sigma_{2q})
\]
of $2q+n+2$ strands,
where $\sigma_i$'s  are the standard generators of the braid group.
Figure \ref{fig:braid} shows this braid and its axis $c_n$.
Let $K_q$ be its closure.
Then, $K_q \cup c_n$ is deformed as shown in Figures \ref{fig:braid1}, \ref{fig:braid2}, \ref{fig:braid3} and \ref{fig:link}, 
where full twists are right-handed.
Hence $K_q$ is the torus knot $T_{2, 2q+1}$. 

Then as shown in Figures~\ref{fig:braid1}, \ref{fig:braid2}, \ref{fig:braid3}, \ref{fig:link},
the link $K_q \cup c_n$ is deformed into a link given in Figure \ref{fig:link2}.

\begin{figure}[h]
\includegraphics*[scale=0.8]{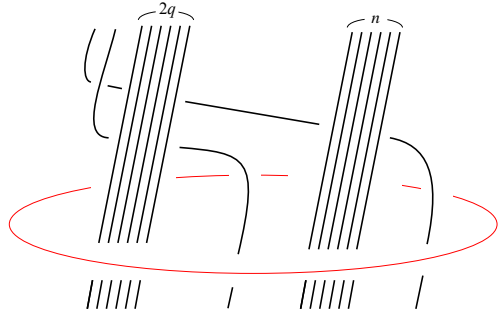}
\caption{A braid with an axis $c_n$.}
\label{fig:braid}
\end{figure}

\begin{figure}[h]
\includegraphics*[scale=0.6]{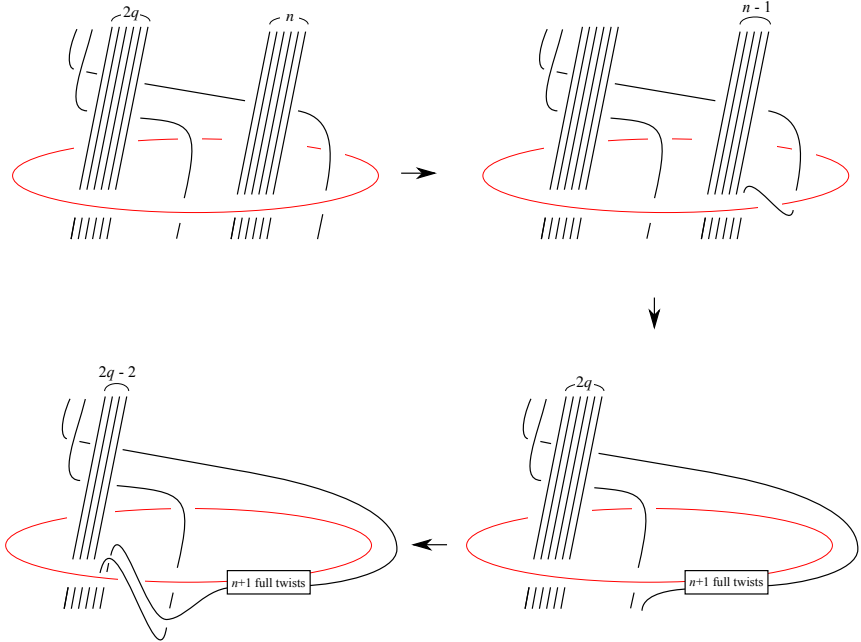}
\caption{Deform $K_q \cup c_n$.}
\label{fig:braid1}
\end{figure}

\begin{figure}[h]
\includegraphics*[scale=0.6]{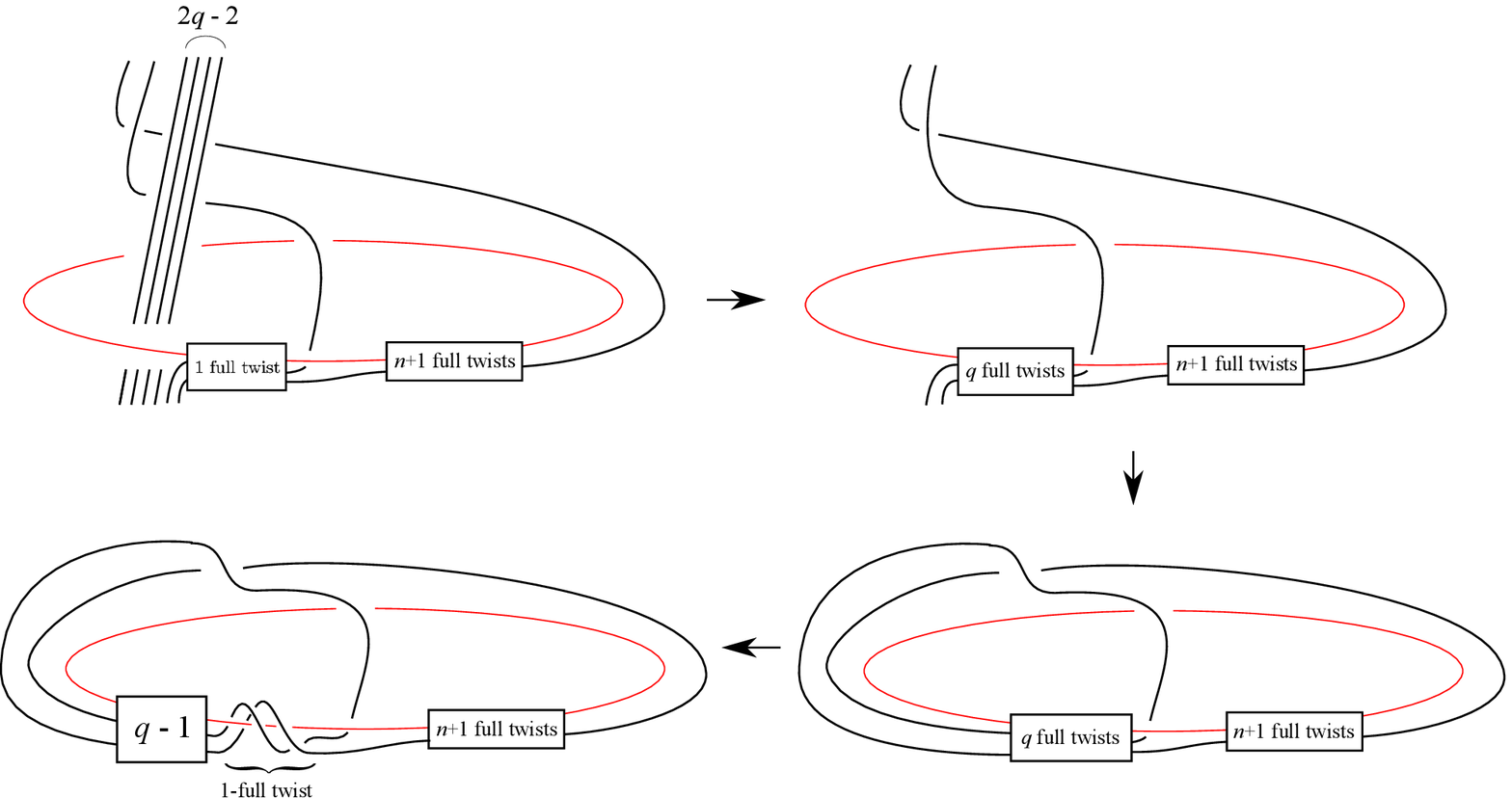}
\caption{Deform $K_q \cup c_n$ (continued).}
\label{fig:braid2}
\end{figure}

\begin{figure}[h]
\includegraphics*[scale=0.6]{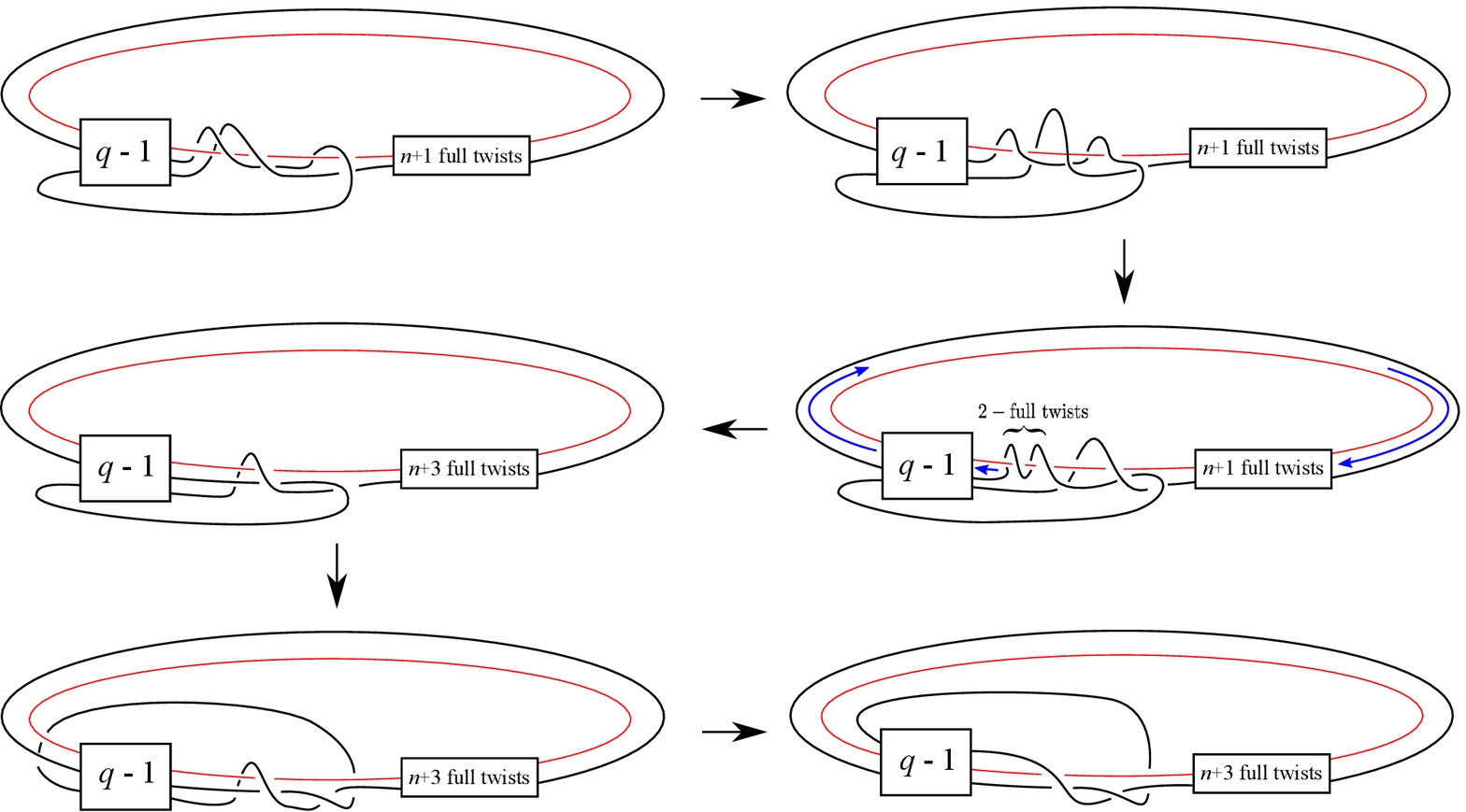}
\caption{Deform $K_q \cup c_n$ (continued).}
\label{fig:braid3}
\end{figure}

\begin{figure}[h]
\includegraphics*[scale=0.4]{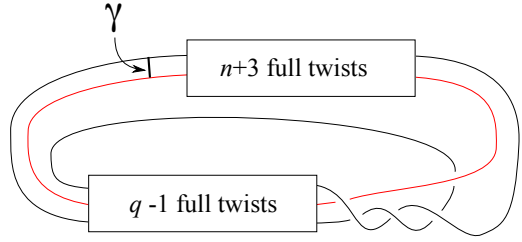}
\caption{The link $K_q \cup c_n$ with unknotting tunnel $\gamma$.}
\label{fig:link}
\end{figure}

\begin{figure}[h]
\includegraphics*[scale=0.4]{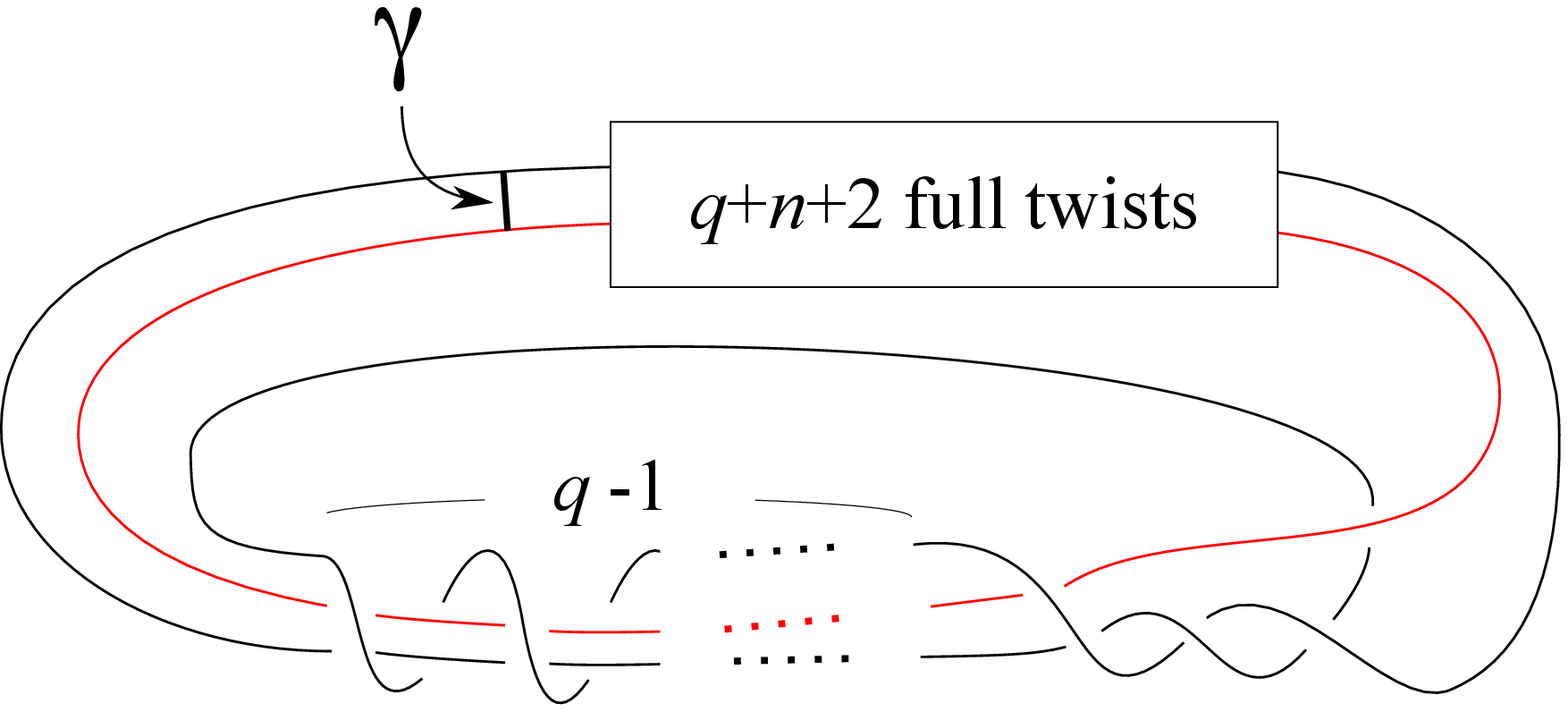}
\caption{$K_q \cup c_n$.}
\label{fig:link2}
\end{figure}

We first claim that the link $K_q \cup c_n$ is a hyperbolic link whose link group
contains a generalized torsion element.

\begin{claim}
\label{cl:hyp}
$K_q \cup c_n$ is hyperbolic.
\end{claim}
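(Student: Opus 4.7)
The plan is to prove hyperbolicity through Thurston's criterion: I would verify that $E(K_q \cup c_n)$ is irreducible, boundary-irreducible, atoroidal, anannular, and not Seifert fibered. Because $c_n$ is unknotted, I would work with $V = S^3 \setminus \mathrm{int}\,\nbhd(c_n)$, an unknotted solid torus containing $K_q$, so that $E(K_q \cup c_n) = V \setminus \mathrm{int}\,\nbhd(K_q)$. In this viewpoint, hyperbolicity is equivalent to $(V, K_q)$ being a \emph{hyperbolic pattern}: $K_q$ is neither contained in a $3$-ball in $V$ nor isotopic to the core of $V$, the complement admits no essential torus, and the pair is not Seifert fibered.

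I would dispatch the easy conditions by reading off combinatorial data from Figure~\ref{fig:link2}. The winding number of $K_q$ in $V$ is visibly at least $2$, ruling out both the $3$-ball case and the core case; irreducibility and boundary-irreducibility follow from the fact that the two components link non-trivially. Non-Seifert-fibration comes from checking that $K_q$ does not sit as a standard $(a,b)$-cable in $V$; the explicit braid word $(\sigma_1\cdots \sigma_{2q+n+1})(\sigma_1\cdots \sigma_{2q})$ is incompatible with such a torus-pattern embedding whenever $n\ge 1$, since the presence of the long factor forces $K_q$ to bridge a meridian disk of $V$ in an essentially non-planar way.

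The main obstacle is atoroidality. Given an essential torus $T \subset E(K_q \cup c_n)$, incompressibility in $V$ forces $T$ to bound a knotted solid torus $W \subsetneq V$ containing $K_q$. I would choose a meridian disk $D$ of $V$ transverse to $K_q$ and isotope $T$ so as to minimize $|T \cap D|$. Standard innermost-disk arguments eliminate all simple closed curves of intersection, and outermost-arc arguments on $D$ then either produce a boundary-compressing disk for $T$ in $V \setminus K_q$ (contradicting essentiality) or force $T \cap D$ to be empty; the residual case reduces to a finite combinatorial check against the explicit strand pattern of the braid. A parallel argument on essential annuli yields anannularity.

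The delicate step is the combinatorial torus analysis, since the complexity grows with $q$ and $n$. As a backup I would verify hyperbolicity directly with SnapPy on base cases and then propagate it by realizing the general $K_q \cup c_n$ as a Dehn filling of a common hyperbolic parent link (for example, the link obtained by replacing each braid factor with an augmenting unknot), so that the combinatorics of the filling coefficients controls the general family and reduces the problem to finitely many explicit checks.
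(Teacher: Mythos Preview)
Your approach is genuinely different from the paper's, and it also contains a concrete gap.

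\textbf{What the paper does.} Because $c_n$ is the braid axis, the exterior $S^3 \setminus (K_q \cup c_n)$ fibers over $S^1$ with fiber a $(2q{+}n{+}2)$--punctured disk, and the monodromy is exactly the braid $(\sigma_1\cdots\sigma_{2q+n+1})(\sigma_1\cdots\sigma_{2q})$. By Thurston, the link is hyperbolic if and only if this braid is pseudo-Anosov. The paper simply observes that this braid is (the mirror of) one appearing in Hironaka--Kin \cite{HK}, where it is shown to be pseudo-Anosov; that finishes the proof in one line. Your plan never exploits this fibered structure, which is the whole point of taking $c_n$ to be a braid axis.

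\textbf{The gap in your argument.} In your atoroidality step you take a meridian disk $D$ of $V$ and propose to run innermost-disk and then outermost-arc arguments on $T\cap D$. But $T$ is a closed surface disjoint from $\partial V$, so $T\cap D$ consists only of simple closed curves; there are no arcs, and ``outermost-arc'' reasoning does not apply. Moreover, innermost-disk arguments only remove curves bounding disks in $D$ that miss $K_q$; curves encircling some of the punctures $K_q\cap D$ are essential in the punctured disk and cannot be eliminated this way. Indeed, for a genuine satellite pattern the companion torus meets $D$ in exactly such curves. So after the standard cleanup you are left with a collection of essential curves in the punctured disk, and the real work---ruling out a compatible companion torus for all $q,n$---has not begun. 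Your own acknowledgment that ``the complexity grows with $q$ and $n$'' and the retreat to a SnapPy/parent-link backup confirm that the argument as written does not close.

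In short: recognize the fibration coming from the braid-axis description and quote the pseudo-Anosov result; that replaces the entire torus/annulus analysis.
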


\begin{proof}[Proof of Claim \ref{cl:hyp}]
The braid given by Figure~\ref{fig:braid} is the mirror image of the braid 
in \cite[Figure 18 (c)]{HK}. 
So it is pseudo-Anosov by \cite[Theorem 3.11]{HK}. 
(In \cite{KR}, Theorem 6.7 treats the case where $n=1$.
Also, in Figure 18(b) and (c) of \cite{HK},
there is a mistake.
The number  $2m-1$ of strands should be $2m$. )
Hence the link complement of $K_q \cup c_n$ is hyperbolic.
\end{proof}

\begin{claim}
\label{cl:g}
The link group of $K_q \cup c_n$ contains a generalized torsion element.
\end{claim}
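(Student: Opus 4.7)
The plan is to apply Proposition~\ref{w_x} to produce a generalized torsion element in $\pi := \pi_1(E(K_q \cup c_n))$, by establishing that the torus knot relation of $K_q$ holds inside $\pi$, not merely after meridional Dehn filling of $c_n$.

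The first step is geometric and is the main obstacle. I would exhibit an embedded annulus $A \subset E(K_q \cup c_n)$ whose two boundary components lie on $\partial N(K_q)$ along the $(2, 2q+1)$-slope. Concretely, I start with the standard torus $T \subset S^3$ on which $K_q$ sits as the $(2,2q+1)$-torus knot, and let $V_1, V_2$ be the two solid tori into which $T$ splits $S^3$. Using the deformations drawn in Figures~\ref{fig:braid1}--\ref{fig:link2}, I would arrange $c_n$ so that the annulus $A := T \setminus N(K_q)$ is disjoint from $c_n$; this requires a careful reading of the pictures and is what I expect to be the hard part.

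Assuming such an $A$ is in hand, I choose loops $a, b \in \pi$ representing cores of $V_1, V_2$ respectively (pushed off $c_n$ if necessary), so that under the surjection $\phi\colon \pi \twoheadrightarrow \pi_1(E(K_q))$ induced by meridional filling they map to the standard torus-knot generators. The annulus $A$ provides a free homotopy in $E(K_q \cup c_n)$ between a curve representing $a^2$ (read from the $V_1$-side) and a curve representing $b^{2q+1}$ (read from the $V_2$-side); basing this homotopy along an arc through $A$ yields
\[
a^{2} \;=\; b^{2q+1} \quad \text{in } \pi.
\]

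Given this relation, $[a, b^{2q+1}] = [a, a^{2}] = 1$ in $\pi$. Since $b^{2q+1}$ is a positive word in the single letter $b$, Proposition~\ref{w_x} applies with $x = a$ and with $b$ in the role of $a^{\varepsilon_a}$, and decomposes $[a, b^{2q+1}]$ as a product of $2q+1$ conjugates of $[a,b]$; unwinding the inductive step in the proof of Lemma~\ref{w} gives the explicit identity
\[
[a,b] \cdot [a,b]^{b} \cdot [a,b]^{b^{2}} \cdots [a,b]^{b^{2q}} \;=\; 1 \quad \text{in } \pi.
\]
It remains to check $[a,b] \neq 1$ in $\pi$; but $\phi([a,b])$ is the commutator of the standard generators of the nonabelian group $\pi_1(E(K_q)) = \langle a, b \mid a^{2} = b^{2q+1}\rangle$ (nonabelianness being visible in the quotient $\mathbb{Z}/2 \ast \mathbb{Z}/(2q+1)$ by the center), so $[a,b] \neq 1$. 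Therefore $[a,b]$ is a generalized torsion element of $\pi$.
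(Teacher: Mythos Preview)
Your geometric first step cannot be carried out: if $c_n$ could be isotoped off the Heegaard torus $T$, then $A = T \setminus N(K_q)$ would be a properly embedded annulus in $E(K_q \cup c_n)$ with both boundary circles on $\partial N(K_q)$ at the cabling slope. Such an $A$ is incompressible (its core is a pushoff of $K_q$, which is homologically nontrivial in the link exterior) and not boundary parallel (on the $V_i$ side not containing $c_n$ the core of $A$ winds $\ge 2$ times around the core of that solid torus, while the side containing $c_n$ has the extra boundary torus $\partial N(c_n)$ and so cannot be a product $A\times I$). Hence $A$ would be \emph{essential}, contradicting Claim~\ref{cl:hyp}: a hyperbolic link exterior is anannular. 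So the cabling annulus of $T_{2,2q+1}$ is genuinely destroyed by $c_n$, and the relation $a^{2}=b^{2q+1}$ does not hold in $\pi$ for the loops you describe. The ``hard part'' you flag is in fact an impossibility.

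The paper proceeds quite differently. It uses that $K_q\cup c_n$ has tunnel number one: after a long sequence of isotopies of the handlebody $N(K_q\cup c_n\cup\gamma)$ it reads off a one-relator presentation
\[
\pi_1(S^3-K_q\cup c_n)\;=\;\bigl\langle\,a,b \ \big|\ [\,b,\ (ab)^{q}a^{\,n+2}(ba)^{q}\,]=1\,\bigr\rangle,
\]
whose single relator is already a commutator $[b,w]$ with $w$ a positive word in $a$ and $b$. Proposition~\ref{w_x} then decomposes this trivial commutator as a product of conjugates of $[b,a]$, and $[b,a]\ne 1$ because $\pi$ is nonabelian (again by Claim~\ref{cl:hyp}). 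Thus the generalized torsion comes from a relation specific to the link, not from the torus-knot relation you hoped to import.
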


\begin{proof}[Proof of Claim \ref{cl:g}]
The link $K_q \cup c_n$ has tunnel number one.
Let $\gamma$ be its unknotting tunnel as shown in Figure \ref{fig:link}.
This means that the outside of the regular neighborhood $N(K_q \cup c_n \cup \gamma)$ 
is a genus two handlebody.
Let $\ell$ be the co-core loop of $N(\gamma)\subset N(K_q \cup c_n \cup \gamma)$.
We deform $N=N(c_n \cup \gamma)$ with $\ell$ as shown in Figure \ref{fig:deform1}, 
where $q+n+2$ full twists are expressed as $-1/(q+n+2)$--surgery along an unknotted circle.
Figures \ref{fig:deform3}, \ref{fig:deform4}, and \ref{fig:deform5}
show the deformation of $N$ with $\ell$, 
where $\ell$ is expressed as a band sum of two circles on $\partial N$ as in Figure~\ref{fig:deform1}. 
Then subsequently deform $N$ as shown in Figures~\ref{fig:deform3} and \ref{fig:deform4}.
Finally, the $q$ twists in Figure \ref{fig:deform4} is absorbed as illustrated in Figure \ref{fig:deform6}.

\begin{figure}[h]
\includegraphics*[scale=0.4]{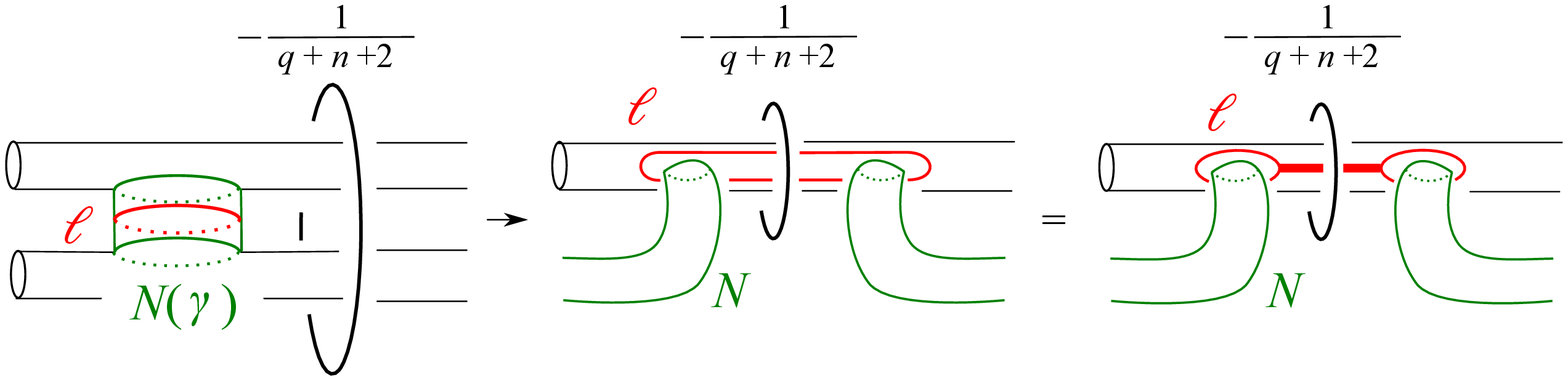}
\caption{Deform $N$ with $\ell$.}
\label{fig:deform1}
\end{figure}

\begin{figure}[h]
\includegraphics*[scale=0.5]{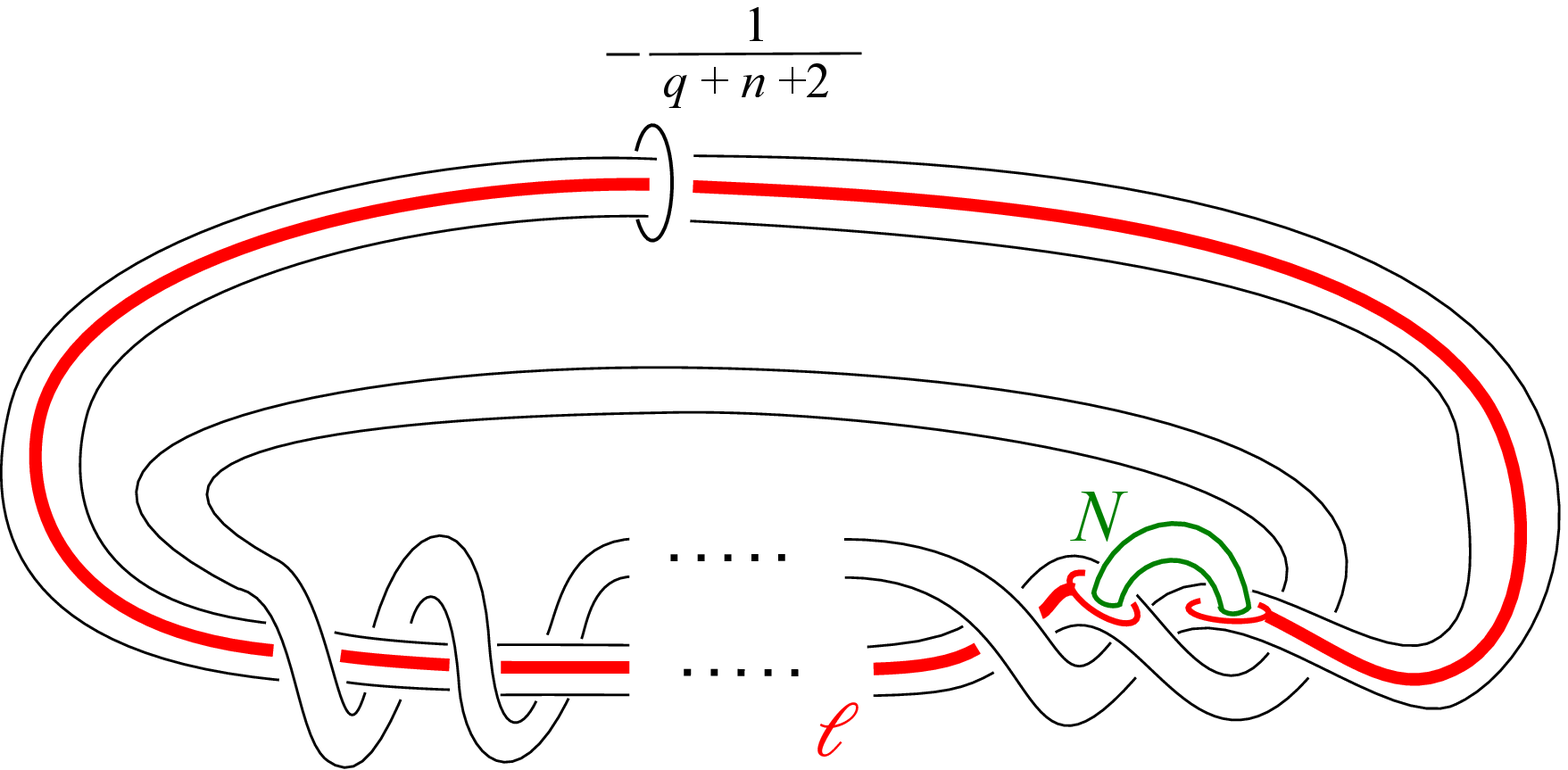}
\caption{Deform $N$ with $\ell$ (continued).}
\label{fig:deform3}
\end{figure}

\begin{figure}[h]
\includegraphics*[scale=0.5]{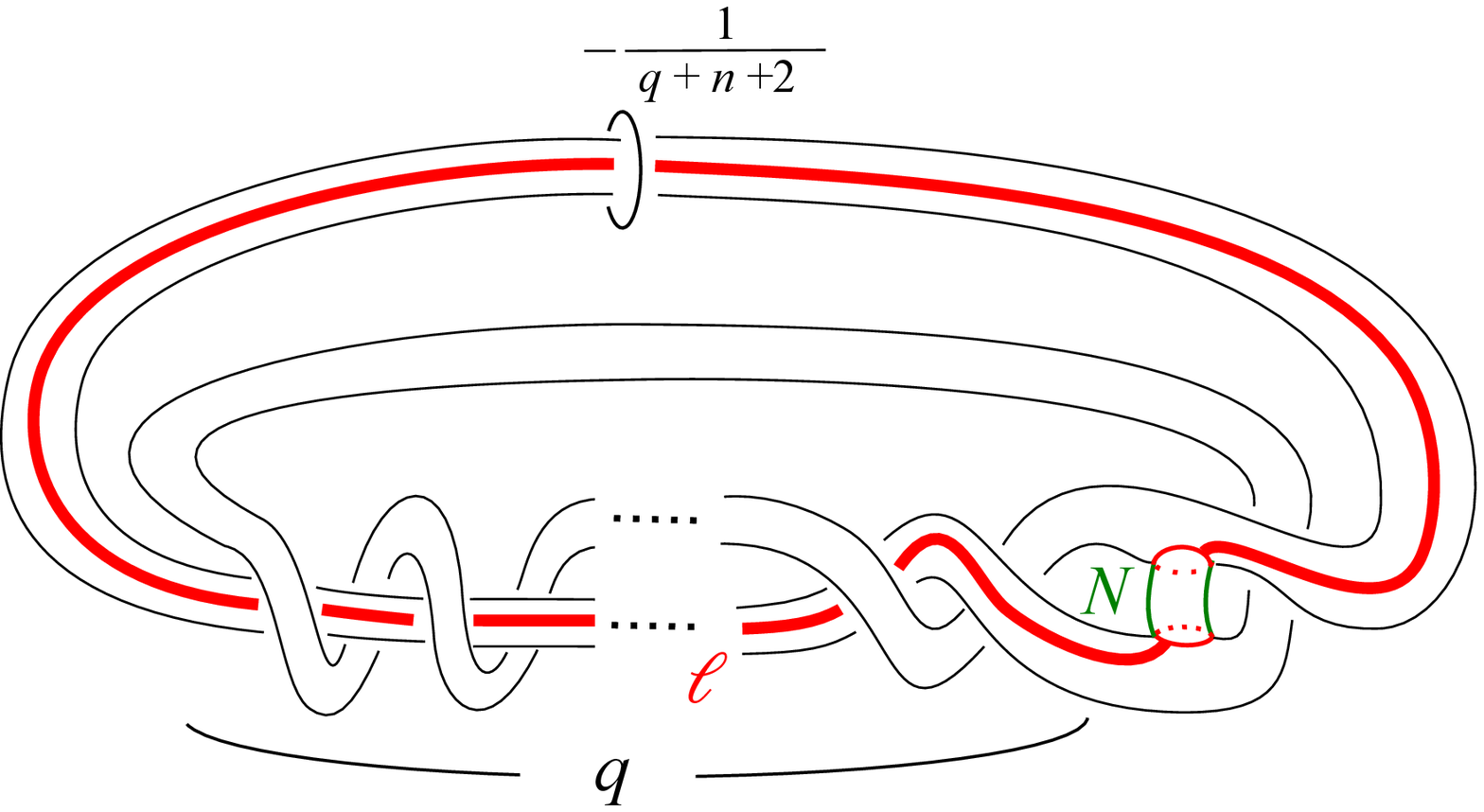}
\caption{Deform $N$ with $\ell$ (continued).}
\label{fig:deform4}
\end{figure}

\begin{figure}[h]
\includegraphics*[scale=0.4]{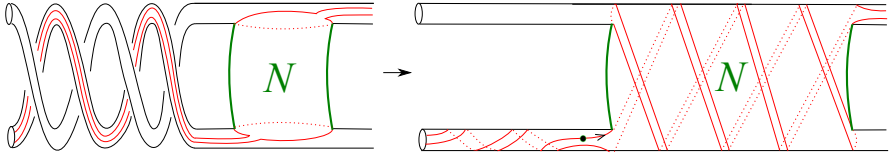}
\caption{Absorb $q$ twists.}
\label{fig:deform6}
\end{figure}

\begin{figure}[h]
\includegraphics*[scale=0.4]{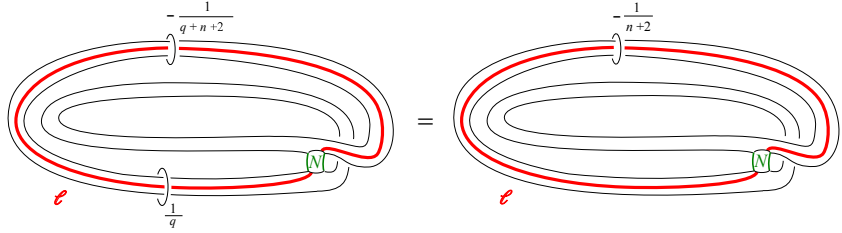}
\caption{Deform $N$ with $\ell$ (continued); $\ell$ is twisted in $N$ as in the right-hand side of Figure~\ref{fig:deform6}.}
\label{fig:deform5}
\end{figure}


\begin{figure}[h]
\includegraphics*[scale=0.5]{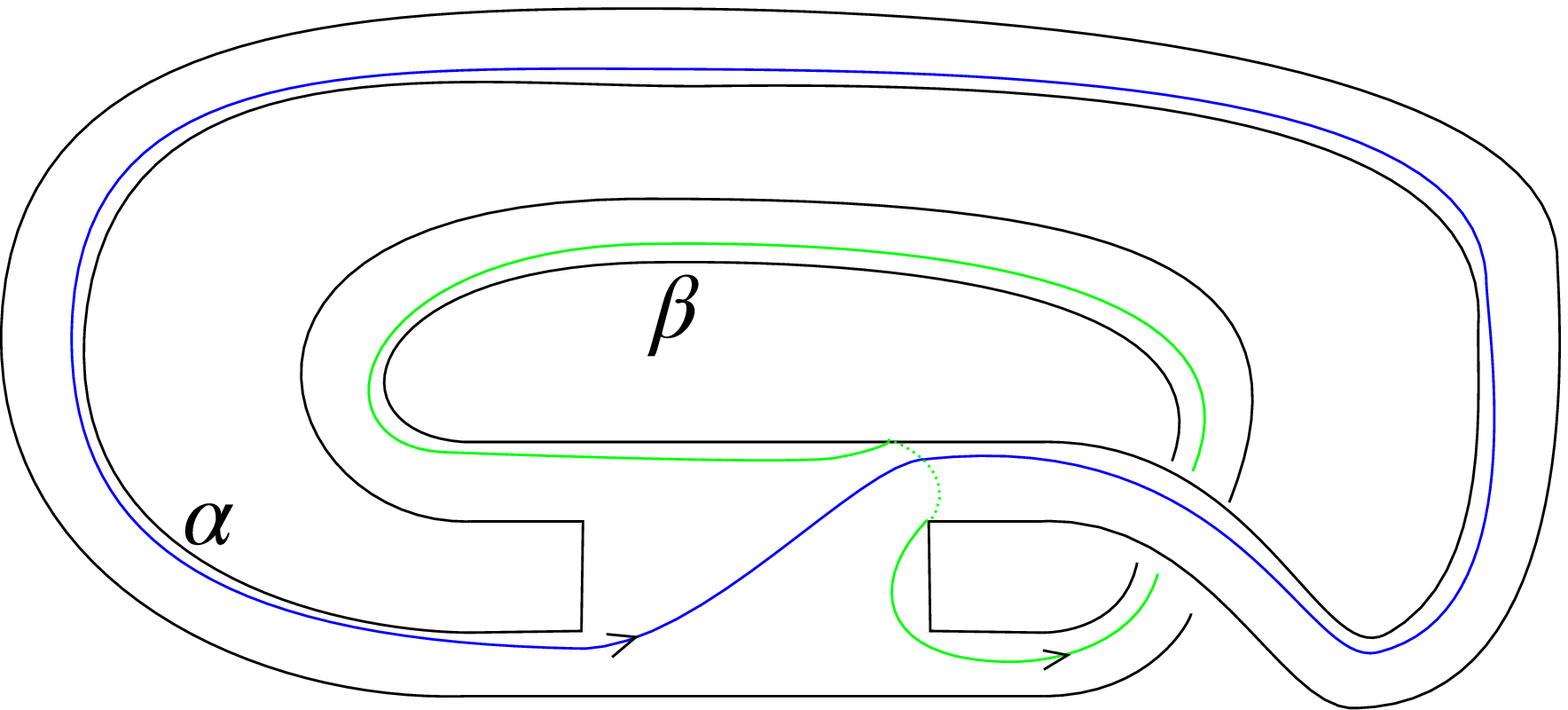}
\caption{Two loops $\alpha$ and $\beta$ bound mutually disjoint meridian disks of the outside handlebody $H$.}
\label{fig:deform7}
\end{figure}

In the final form of Figure \ref{fig:deform5}, 
it is obvious to see that the outside of $N(K_q \cup c_n \cup \gamma)$ is a genus two handlebody denoted by $H$.
As in Figure \ref{fig:deform7},
the loops $\alpha$ and $\beta$, which lie on $\partial N$, 
bound mutually disjoint non-separating meridian disks of $H$.
If we take generators $a$ and $b$ of $\pi_1(H)$ as duals of $\alpha$ and $\beta$,
then $\pi_1(H)$ is a rank two free group generated by $a$ and $b$.
The link exterior of $K_q \cup c_n$ is obtained from $H$ by attaching a $2$--handle along $\ell$.
By following the intersection points of $\ell$ with $\alpha$ and $\beta$,
we can represent $\ell$ as a word $w$ of $a$ and $b$.
In fact, by choosing the base point and an orientation for $\ell$ as shown in Figures \ref{fig:deform6}. 
Then referring Figure~\ref{fig:deform7},
we obtain 
\begin{eqnarray*}
w_{q, n} 
&=& a(b^{-1}a^{-1})^{q}b^{-1}(ab)^{q}a^{n+2}b(ab)^{q}(a^{-1}b^{-1})^{q}a^{-1}a^{-(n+2)} \\
&=& a(b^{-1}a^{-1})^{q}b^{-1}(ab)^{q}a^{n+2}(ba)^{q}b(a^{-1}b^{-1})^{q}a^{-(n+3)}
\end{eqnarray*}
Hence we have
\begin{align*}
\pi_1(S^3-K_q \cup c_n)
&=\langle a, b \mid w_{q, n} = 1 \rangle\\
&=\langle a, b \mid    
(ab)^{q}a^{n+2}(ba)^{q}b=b(ab)^{q}a^{n+2}(ba)^{q} \rangle\\
&=\langle a, b \mid [b, (ab)^{q}a^{n+2}(ba)^{q}]=1 \rangle.
\end{align*}

Proposition~\ref{w_x} shows that 
the commutator $[b, (ab)^{q}a^{n+2}(ba)^{q}]$ 
is decomposed into a product of conjugates of $[b, a]$.  
Hence $[b, a]$ is a generalized torsion element in $\pi_1(S^3-K_q \cup c_n)$ 
if it is nontrivial. 
Assume for a contradiction that $[b, a] = 1$ in $\pi_1(S^3-K_q \cup c_n)$. 
Then, since $a$ and $b$ generate $\pi_1(S^3-K_q \cup c_n)$, 
$\pi_1(S^3-K_q \cup c_n)$ would be abelian. 
However, 
the unknot and the Hopf link are the only knot and link with abelian knot or link group, 
contradicting Claim~\ref{cl:hyp}. 
Thus $[b,a]$ is nontrivial, 
and hence a generalized torsion element in $\pi_1(S^3-K_q \cup c_n)$.
\end{proof}

\begin{claim}
\label{g-torsion_Kqnp}
$[b, a]$ is a generalized torsion element of $G(K_{q, n, p})$, provided if $|p| \ne 1$. 
\end{claim}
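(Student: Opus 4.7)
The plan is to transport the generalized-torsion certificate from Claim~\ref{cl:g} down to the knot group of the filled manifold, and then to verify that $[b,a]$ remains non-trivial there.

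Since $K_{q,n,p}$ is obtained from $K_q$ by $p$-twisting about the unknot $c_n$, which amounts to a Dehn filling of the exterior of $K_q \cup c_n$ along the slope determined by $p$, the knot group $G(K_{q,n,p})$ is a quotient of $\pi_1(S^3 - (K_q \cup c_n)) = \langle a, b \mid w_{q,n} = 1 \rangle$. Consequently the relation $[b, (ab)^{q} a^{n+2} (ba)^{q}] = 1$ still holds in $G(K_{q,n,p})$, and Proposition~\ref{w_x} expresses the left-hand side as a non-empty product of conjugates of $[b, a]$. So as soon as $[b, a] \ne 1$ in $G(K_{q,n,p})$, it is automatically a generalized torsion element.

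To rule out $[b, a] = 1$, I would argue exactly as in Claim~\ref{cl:g}. The elements $a$ and $b$ generate the link group, hence their images generate $G(K_{q,n,p})$; if $[b,a] = 1$ in this quotient, then $G(K_{q,n,p})$ is abelian, and since a knot group is torsion-free with abelianization $\mathbb{Z}$, this forces $G(K_{q,n,p}) \cong \mathbb{Z}$, whence $K_{q,n,p}$ would be the unknot.

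The claim thus reduces to showing that $K_{q,n,p}$ is non-trivial whenever $|p| \ne 1$. The case $p = 0$ is immediate since $K_{q,n,0} = K_q = T_{2, 2q+1}$ is a non-trivial torus knot. For $|p| \ge 2$, the linking number of $c_n$ and $K_q$ exceeds one, and \cite[Theorem~2.1]{BM} then gives $g(K_{q,n,p}) \to \infty$ as $|p| \to \infty$. The main obstacle is to cover the borderline small values $|p| = 2, 3$ uniformly: here I would either appeal to a sharper form of the Baker--Motegi genus estimate, or, failing that, invoke the hyperbolicity of $K_q \cup c_n$ from Claim~\ref{cl:hyp} together with Thurston's hyperbolic Dehn surgery theorem, which leaves only finitely many exceptional slopes on $c_n$; these can then be enumerated and ruled out via a direct invariant computation (for instance, by writing the Alexander polynomial of $K_{q,n,p}$ as a function of $p$ and checking that it is non-trivial for $|p| \ne 1$).
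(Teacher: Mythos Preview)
Your reduction is exactly the paper's: pass to the quotient $G(K_{q,n,p})$ via the Dehn-filling epimorphism, carry the relation $[b,(ab)^q a^{n+2}(ba)^q]=1$ along, apply Proposition~\ref{w_x}, and observe that $[b,a]=1$ would make $G(K_{q,n,p})$ abelian, hence isomorphic to $\mathbb{Z}$, hence $K_{q,n,p}$ the unknot. The case $p=0$ is also handled identically.

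The gap is in the last step, showing $K_{q,n,p}$ is non-trivial for every $|p|\ge 2$. The Baker--Motegi result you cite only gives $g(K_{q,n,p})\to\infty$ as $|p|\to\infty$, with no explicit threshold, so it does not by itself dispose of any particular $p$; and your fallback suggestions (a ``sharper'' genus bound, Thurston's theorem plus enumeration, or an Alexander-polynomial check) are not carried out and would have to be made uniform in the two extra parameters $q$ and $n$. As written, the non-triviality for $|p|\ge 2$ is asserted rather than proved.

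The paper closes this gap with a short contrapositive that you are missing. Suppose $K_{q,n,p}$ were the unknot for some $p$ with $|p|\ge 2$. Its exterior is then a solid torus $V$, and $c_n$ sits in $V$ with $V-\mathrm{int}\,N(c_n)$ homeomorphic to the original link exterior $S^3-\mathrm{int}\,N(K_q\cup c_n)$, which is hyperbolic by Claim~\ref{cl:hyp}. Undoing the $p$-twist is precisely $\tfrac{1}{p}$-surgery on $c_n\subset V$, and the output is $E(T_{2,2q+1})$, a Seifert fibered space. Now \cite[Theorem~1.2]{MM3} applies (a hyperbolic knot in a solid torus admitting a non-trivial Seifert-fibered surgery) and forces $|p|=1$, a contradiction. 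This single citation handles all $q,n$ at once and is the missing ingredient in your argument.
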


\begin{proof}
We remark that $E(K_{q, n, p})$ is obtained from $S^3 - \mathrm{int}N(K_q \cup c_n)$ by 
$(-\frac{1}{p})$--Dehn filling along $\partial N(c_n)$. 
This gives us an epimorphism
\[
\varphi \colon \pi_1(S^3 - \mathrm{int}N(K_q \cup c_n)) \to G(K_{q, n, p}) = \pi_1(E(K_{q, n, p})). 
\]
Recall that $a$ and $b$ generate $\pi_1(S^3 - \mathrm{int}N(K_q \cup c_n))$ and satisfy 
$[b, (ab)^q a^{n+2}(ba)^q] = 1$. 
For notational simplicity, in the following we use the same symbol $a$ and $b$ to denote 
$\varphi(a)$ and $\varphi(b)$. 
Then $G(K_{q, n, p})$ is generated by $a$ and $b$. 
Since $[b, (ab)^q a^{n+2}(ba)^q] = 1 \in G(K_{q, n, p})$, 
by Proposition~\ref{w_x}, 
$[b, a]$ is a generalized torsion element of $G(K_{q, n, p})$ whenever it is nontrivial. 
 
Let us assume that $[b, a]$ is trivial in $G(K_{q, n, p})$. 
Then $G(K_{q, n, p})$ is abelian, and hence $G(K_{q, n, p}) \cong \mathbb{Z}$, 
i.e. $K_{q, n, p}$ is a trivial knot. 
If $n = 0$, then $K_{q, n, 0} = K_{q, n}$ is a nontrivial torus knot $T_{2, 2q+1}$.
Hence $n \ne 0$.  
Note that the torus knot space $E(K_{q, n}) = E(K_{q, n, 0}) = E(T_{2, 2q+1})$ is obtained from 
the solid torus $V = S^3 - \mathrm{int}N(K_{q, n, p})$ by $\frac{1}{p}$--surgery on $c_n$. 
Since $E(K_{q, n})$ is a Seifert fiber space and $V- \mathrm{int}N(c_n)$ is hyperbolic (Claim~\ref{cl:hyp}), 
we may apply \cite[Theorem~1.2]{MM3} to conclude that $|p| = 1$. 
\end{proof}

\begin{remark}
In Claim \ref{g-torsion_Kqnp},
$K_{q, n, \pm 1}$ also has a generalized torsion. 
We may observe that $K_{q, n, 1}$ is a closure of a positive braid, 
and $K_{q, n, -1}$ is a closure of a negative braid.
Hence \cite{Stallings} shows that both have a positive genus. 
This implies that $K_{q,n,\pm 1}$ is nontrivial. 
\end{remark}

\begin{claim}
\label{Kqnp_hyp}
$K_{q, n, p}$ is a hyperbolic knot if $|p| > 3$. 
\end{claim}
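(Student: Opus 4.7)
The plan is to realise $E(K_{q,n,p})$ as a Dehn filling of the hyperbolic link exterior $E(K_q\cup c_n)$ along the slope $-1/p$ on the cusp $\partial N(c_n)$, and then rule out non-hyperbolic outcomes whenever $|p|>3$. Since Claim~\ref{cl:hyp} tells us that $E(K_q\cup c_n)$ is hyperbolic, Thurston's hyperbolic Dehn surgery theorem immediately yields that only finitely many integers $p$ are exceptional; the substance of Claim~\ref{Kqnp_hyp} is the explicit threshold $|p|\le 3$.

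I would split the exceptional cases according to how $K_{q,n,p}$ can fail to be hyperbolic: it must be either the unknot, a nontrivial torus knot, or a satellite knot. The unknot case for $|p|\ne 1$ is handled exactly as in the last paragraph of the proof of Claim~\ref{g-torsion_Kqnp}, via $E(K_q)$ being Seifert fibered, $V-\mathrm{int}\,N(c_n)$ being hyperbolic, and \cite[Theorem~1.2]{MM3}. A torus-knot outcome corresponds to a Seifert fibered Dehn filling of $E(K_q\cup c_n)$, and a satellite outcome corresponds to a toroidal (or reducible) filling. Thus the task reduces to bounding those integers $p$ for which the $-1/p$-filling is Seifert fibered, toroidal, or reducible.

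For this I would invoke the standard distance estimates for exceptional slopes on a one-cusped hyperbolic 3-manifold obtained from $E(K_q\cup c_n)$ by filling $\partial N(K_q)$ trivially: the Gordon--Luecke bound for reducible slopes, Gordon's bound $\Delta\le 8$ between toroidal slopes, and the Boyer--Zhang bounds for Seifert fibered slopes. On $\partial N(c_n)$, any two slopes of the form $-1/p_1$ and $-1/p_2$ sit at distance $|p_1-p_2|$, and the distance from $-1/p$ to the meridional slope $\infty$ (which recovers the Seifert fibered space $E(K_q)$) is $|p|$. Feeding these universal distance estimates into the classification of exceptional slopes, together with the fact that $\infty$ is already a Seifert fibered slope, produces a uniform finite bound on the exceptional $|p|$.

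The main obstacle is sharpening this finiteness into the explicit threshold $|p|\le 3$, since the generic machinery above, or even the Agol--Lackenby $6$-theorem, is not by itself tight enough. I expect that the refinement exploits the very concrete description of $E(K_q\cup c_n)$ from the deformation culminating in Figure~\ref{fig:deform5}: the genus-two handlebody $H$ together with the attaching word $w_{q,n}$ from the proof of Claim~\ref{cl:g} gives direct control over essential surfaces in the link exterior, and a $p$-twist amounts to a prescribed modification of $w_{q,n}$. An incompressible torus or small Seifert fibration in the filled manifold would force the co-core of $N(c_n)$ to meet the surface with controlled intersection number, and a finite case check on the remaining slopes $|p|\le 3$ then confirms that the bound is indeed the correct one.
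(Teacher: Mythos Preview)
Your outline does not actually reach the claimed threshold $|p|\le 3$; you yourself flag this as ``the main obstacle'' and then only gesture at how the explicit handlebody description might close it. As written, the argument establishes only that finitely many $p$ are exceptional (plus some loose distance bounds), so the claim is not proved. There is also a technical slip: the ``one-cusped hyperbolic $3$-manifold obtained from $E(K_q\cup c_n)$ by filling $\partial N(K_q)$ trivially'' is just $S^3-\mathrm{int}\,N(c_n)$, a solid torus, so it is not hyperbolic and the standard distance machinery does not apply to it in the way you suggest.

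The paper, by contrast, does not reprove any of this from first principles. It observes that $K_{q,n,0}=K_q$ is the nontrivial torus knot $T_{2,2q+1}$, that $S^3-\mathrm{int}\,N(K_q\cup c_n)$ is hyperbolic by Claim~\ref{cl:hyp}, and then directly invokes \cite[Proposition~5.11]{DMM}, which is a ready-made statement about twisting a Seifert fibered knot along a circle forming a hyperbolic link with it; that proposition already packages the sharp bound $|p|\le 3$. If you want to avoid the black-box citation, you would need to reproduce the analysis in \cite{DMM} (or the underlying \cite{MM3}) rather than appeal to the generic Gordon/Boyer--Zhang/Agol--Lackenby estimates, which, as you note, are not tight enough here.
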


\begin{proof}
Since $K_{q, n}$ is a nontrivial torus knot $T_{2, 2q+1}$ and $S^3 - \mathrm{int}N(K_q \cup c_n)$ is hyperbolic (Claim~\ref{cl:hyp}), 
\cite[Proposition~5.11]{DMM} shows that $K_{q, n, p}$ is hyperbolic if $|p| > 3$. 
\end{proof}

\begin{claim}
\label{infinite_c_n}
$K_q \cup c_n$ and $K_q \cup c_{n'}$ are not isotopic when $n \ne n'$ 
\end{claim}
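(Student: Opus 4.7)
The plan is to distinguish the links $K_q \cup c_n$ and $K_q \cup c_{n'}$ via their linking numbers, which form an ambient isotopy invariant of two--component links (up to a global sign, if one does not fix orientations on the components).

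To compute $\mathrm{lk}(K_q, c_n)$, I would rely on the original description in Figure~\ref{fig:braid}: by construction $c_n$ is the braid axis of a braid on $2q+n+2$ strands whose closure is $K_q$. Since every strand of a braid passes through a meridian disk of the axis exactly once, and all strands are coherently oriented, the single component $K_q$ meets such a disk algebraically $2q+n+2$ times. Hence $|\mathrm{lk}(K_q, c_n)| = 2q+n+2$, which visibly depends on $n$.

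Consequently, when $n \ne n'$ we obtain $|\mathrm{lk}(K_q, c_n)| \ne |\mathrm{lk}(K_q, c_{n'})|$, and the two links cannot be ambient isotopic. The only step requiring any care is justifying the linking number formula from the braid picture, but this is a standard observation for braid closures and is not a genuine obstacle. If desired, one could replace this verification with an alternative invariant: since \cite[Theorem~2.1]{BM} expresses the growth rate of $g(K_{q,n,p})$ as $|p| \to \infty$ in terms of $|\mathrm{lk}(K_q, c_n)|$, non--isotopy of the links also follows from the observation that the sequences $\{g(K_{q,n,p})\}_p$ and $\{g(K_{q,n',p})\}_p$ grow at different rates, an argument already in the background of the corollary deduced from Theorem~\ref{twist_T}.
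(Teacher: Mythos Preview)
Your argument is correct and matches the paper's own proof: the paper also observes from Figure~\ref{fig:braid} that $\mathrm{lk}(K_q,c_n)=2q+n+2$ with a suitable orientation, and concludes immediately that distinct values of $n$ yield non-isotopic links. Your added justification via the braid-axis disk and the alternative genus-growth remark are fine embellishments, but the core idea is identical.
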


\begin{proof}
As seen in Figure \ref{fig:braid},
the linking number between $K_q$ and $c_n$ is $2q+n+2$ with a suitable orientation.
Hence, if $n \ne n'$, 
$K_q \cup c_n$ and $K_q \cup c_{n'}$ are not isotopic. 
\end{proof}

Thus we obtain infinitely many twisting circles $c_n$ for $K _q = T_{2, 2q+1}$ by varying $n$. 

\medskip

Now the proof of Theorem~\ref{twist_T} follows from Claims~\ref{g-torsion_Kqnp}, \ref{Kqnp_hyp} and 
\ref{infinite_c_n}. 
\end{proof}

\medskip

\begin{remark}
\begin{enumerate}
\item
When $q=0$,
the link $K_0\cup c_n$ is equivalent to the pretzel link of type $(-2,3,2n+6)$.
This special case is treated in  \cite{Tera_link}.
\item
The generalized torsion element for $K_{q,n,p}$ is derived from that of the link $K_q\cup c_n$.
Hence, the element lies in the complement of $K_q\cup c_n$, and furthermore, the $2$-complex which realizes
the triviality of the product of its conjugates also lies there.
The word representing the element is $[b,a]$ in common, but  the generators $a$ and $b$ of $\pi_1(S^3-\mathrm{int} N(K_q\cup c_n))$ 
depend on the parameters $q$ and $n$.

\end{enumerate}
\end{remark}

As an application we have the following. 

\begin{cor:g-torsion_large_genus}
There are infinitely many hyperbolic knots with arbitrarily high genus, each of which has a generalized torsion.
\end{cor:g-torsion_large_genus}

\begin{proof}
Let us take the twist family $\{ K_{q, n, p} \}_{p \in \mathbb{Z}}$ given in Theorem~\ref{twist_T}. 
Then $K_{q, n, p}$ is a hyperbolic knot with a generalized torsion element whenever $|p| > 3$. 
Since the linking number between $K_q$ and $c_n$ is greater than one, 
\cite[Theorem~2.1]{BM} shows that the genus of $K_{q, n, p}$ tends to $\infty$ as $p \to \infty$.
\end{proof}

The next is a slight generalization of \cite[Theorem 6.7]{KR}.
An $n$-strand braid naturally induces an automorphism of the free group $F_n$ of rank $n$. 
It is well-known that $F_n$ is bi-orderable.  
The braid is said to be \textit{order-preserving} if the corresponding automorphism preserves some bi-ordering of $F_n$.
See \cite{KR} for details.


\begin{corollary}
For an integer $q\ge 1$ and $n\ge 1$,
the braid
\[
(\sigma_1\sigma_2\cdots \sigma_{2q +n+1})(\sigma_1\sigma_2\cdots \sigma_{2q})
\]
is not order-preserving.
\end{corollary}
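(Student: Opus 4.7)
The plan is to deduce this corollary by combining Claim~\ref{cl:g} with the well-known interpretation of order-preserving braids in terms of bi-orderings of mapping torus groups. Set $N = 2q+n+2$ and let $\beta$ denote the braid in the statement, with associated automorphism $\phi_\beta \in \mathrm{Aut}(F_N)$ of the free group on $N$ generators.

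First I would recall the standard fact that the complement $S^3 - \mathrm{int} N(K_q \cup c_n)$ of the closure of $\beta$ together with its braid axis is homeomorphic to the mapping torus of a homeomorphism of the $N$-punctured disk inducing $\phi_\beta$ on the free fundamental group. Consequently,
\[
\pi_1(S^3 - K_q \cup c_n) \cong F_N \rtimes_{\phi_\beta} \mathbb{Z}.
\]
Next, I would invoke the classical observation (see \cite{KR}) that if $\phi_\beta$ preserves some bi-ordering of $F_N$, then this semi-direct product inherits a bi-ordering by lexicographically combining the bi-ordering on $F_N$ with the natural bi-ordering of $\mathbb{Z}$; this is precisely the definition of the braid being order-preserving at the level of $3$-manifold groups.

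Now suppose for contradiction that $\beta$ is order-preserving. Then $\pi_1(S^3 - K_q \cup c_n)$ is bi-orderable, and in particular it contains no generalized torsion element. But by Claim~\ref{cl:g}, the element $[b,a]$ is a nontrivial generalized torsion element in $\pi_1(S^3 - K_q \cup c_n)$. This contradiction shows that $\beta$ cannot be order-preserving.

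The main obstacle is essentially bookkeeping: verifying that the identification of the link complement with the relevant mapping torus is correct (so that the semi-direct product structure is the one governed by $\phi_\beta$), and that the bi-orderability of the semi-direct product really follows from $\phi_\beta$ preserving a bi-ordering. Both of these are standard, so the conclusion is immediate once Claim~\ref{cl:g} is in hand.
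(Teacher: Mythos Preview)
Your proposal is correct and follows essentially the same route as the paper: both use Claim~\ref{cl:g} to conclude that $\pi_1(S^3-K_q\cup c_n)$ is not bi-orderable, and then invoke the equivalence between order-preservation of a braid and bi-orderability of the link group of its closure together with the axis. The paper simply cites \cite[Proposition~4.1]{KR} for this equivalence, whereas you unpack one direction of it via the mapping-torus description and the lexicographic ordering on the semi-direct product.
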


\begin{proof}
As shown in the proof of Theorem \ref{twist_T} (Claim~\ref{cl:g}), 
the link $K_q \cup c_n$ has a generalized torsion element in its link group, 
so the group is not bi-orderable.
By \cite[Proposition 4.1]{KR},  this is equivalent to the conclusion. 
\end{proof}

%
%
%
%
%
%
%

\medskip

\section{Twisted torus knots}

In this section, we give several families of twisted torus knots, whose knot groups have
generalized torsion elements.

Let $p\ge 2$ and $m, s\ge 1$, and
let $K$ be the twisted torus knot $K(p(m+1)+1,pm+1; 2, s)$. 

\begin{lemma}
The knot group $G(K)$ has a presentation
\begin{equation}
\label{presentation_K(p(m+1)+1,pm+1; 2, s)}
\begin{split}
\langle a, c \mid\  &
a^{(p-1)(m+1)+1}(    a^{-(p-2)(m+1)-1}   c^{(p-2)m+1}  )^s a^{m+1}= \\
&  c^{(p-1)m+1} (  a^{-(p-2)(m+1)-1}   c^{(p-2)m+1}  )^s c^{m}
\rangle
\end{split}
\end{equation}

\end{lemma}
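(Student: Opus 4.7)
The plan is to realize $K = K(p(m+1)+1, pm+1; 2, s)$ as a simple closed curve on a genus-two Heegaard surface of $S^3$, in the spirit of Dean's framework for twisted torus knots, and extract the claimed two-generator one-relator presentation from this structure. Concretely, I would start with the standard torus decomposition $S^3 = V_1 \cup_T V_2$ carrying the underlying torus knot $T(p(m+1)+1, pm+1) \subset T$, identify a ball $B$ containing the two strands that are twisted $s$ times, and attach a $1$--handle (a tube) to $V_1$ inside $B$. This produces a genus-two handlebody $H_1$ with complementary genus-two handlebody $H_2$, and, after a small isotopy, $K$ lies on $\Sigma = \partial H_1$.

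With generators $a \in \pi_1(H_1)$ dual to the meridian disk inherited from $V_1$ and $c$ dual to the co-core of the added tube, one has $\pi_1(H_1) = F(a,c)$, free of rank two. Because $K$ is a simple closed curve on $\Sigma$ and has tunnel number one, the exterior $E(K)$ inherits a genus-two Heegaard splitting, from which one extracts a one-relator presentation on $a, c$: the relator is the word $w(a,c)$ representing the conjugacy class of $K$ in $\pi_1(H_1)$.

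To compute $w(a,c)$, I would parametrize $K$ around $\Sigma$ and tally intersections with the two meridian disks of $H_1$: each longitudinal pass around $V_1$ contributes a power of $a$, each traversal of the added tube contributes a $c^{\pm 1}$, and the torus winding outside $B$ produces blocks of powers of $a$ interleaved with powers of $c$. The $s$ full twists inside $B$ contribute $s$ copies of a fixed subword $W = a^{-(p-2)(m+1)-1} c^{(p-2)m+1}$, bracketed on either side by the portion of the torus winding lying outside $B$. Splitting $K$ at the two natural points determined by the meridian disks of $H_2$ and reading off the word along the two resulting arcs yields the equality
\[
a^{(p-1)(m+1)+1}\, W^s\, a^{m+1} \;=\; c^{(p-1)m+1}\, W^s\, c^m,
\]
which is precisely the asserted relator.

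The main obstacle is the bookkeeping in this last step: one must correctly track how the two twisted strands sit among the $p(m+1)+1$ parallel longitudinal strands and the $pm+1$ meridional ones. The exponents $(p-1)(m+1)+1$ and $m+1$ arise from the split of the $p(m+1)+1$ longitudinal crossings by the position of the twist region, while the exponent $(p-2)(m+1)+1$ inside $W$ reflects that two of the $p$ strands per period are the twisted ones and are therefore routed through the tube rather than contributing as $a$-crossings; the analogous counts on the $c$--side give the remaining exponents. A useful sanity check, and a natural base case for an induction on $s$, is $s=0$: then $K$ is simply the torus knot $T(p(m+1)+1, pm+1)$, and the relator collapses to $a^{p(m+1)+1} = c^{pm+1}$, the standard torus knot presentation.
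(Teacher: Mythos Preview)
There is a genuine gap in your identification of the relator. You assert that the single relator is the word representing $K$ in $\pi_1(H_1)$, but that is not how a one-relator presentation of a tunnel-number-one knot group arises: in a genus-two splitting $E(K)=W\cup C$ (handlebody union compression body), the relator is the attaching circle of the unique $2$--handle of $C$---the boundary of the compressing disk---read off in $\pi_1(W)$, not the knot itself. Your own $s=0$ check exposes this. With the tube attached to $V_1$ and $K=T(P,Q)$ (where $P=p(m+1)+1$, $Q=pm+1$) lying on the original torus and not running over the tube, the class $[K]\in\pi_1(H_1)=F(a,c)$ is simply $a^{P}$, so your recipe yields $\langle a,c\mid a^{P}\rangle\cong\mathbb{Z}/P*\mathbb{Z}$, not the torus-knot group $\langle a,c\mid a^{P}=c^{Q}\rangle$. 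Indeed, in the target presentation the generators $a$ and $c$ live in \emph{opposite} handlebodies of the genus-two splitting, not in a single $H_1$.

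The paper's route is different and concrete. Following Clay--Watson and Christianson--Goluboff--Hamann--Varadaraj, one places a reference curve $K_0$ on the standard genus-two Heegaard surface, applies an explicit composition of Dehn twists carrying $K_0$ to $K$, tracks three free generators of $\pi_1(\Sigma-K)$ into $\pi_1(U)=F(a,b)$ and $\pi_1(V)=F(c,d)$, applies Seifert--van Kampen to obtain a four-generator, three-relator presentation, and only then eliminates $b$ and $d$ by Tietze moves. Your later remark about ``meridian disks of $H_2$'' gestures at a correct ingredient, but it is not reconciled with the earlier claim about $[K]$, and the bookkeeping you flag as ``the main obstacle'' is, in effect, the entire computation that remains to be done.
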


\begin{proof}
We follow the argument of \cite{CGHV,CW}.
Let $\Sigma$ be the standard genus two Heegaard surface of $S^3$ 
with the standard generators $a, b, c, d$ of $\pi_1(\Sigma)$. 
Then it bounds genus two handlebodies $U$ inside and $V$ outside. 
Note that $a, b$ generate $\pi_1(U)$ and $c, d$ generate $\pi_1(V)$. 
For convenience we use the same symbols $a, b$ to denote $i_*(a), i_*(b) \in \pi_1(U)$, 
where $i \colon \Sigma \to U$ is the inclusion, 
and similarly use the same symbols $c,d$ to denote $j_*(c), j_*(d) \in \pi_1(V)$, 
where $j \colon \Sigma \to V$ is the inclusion. 

We put $K_0$ as illustrated in Figure \ref{fig:heegaard}. 
Then $\Sigma-K_0$ retracts to the wedge of three circles $G_0$, $R_0$ and $P_0$.
Hence  $G_0$, $R_0$ and $P_0$ represent generators of $\pi_1(\Sigma-K_0)$. 
We also note that $G_0$, $R_0$ and $P_0$ represent $b$, $d$ and $c$, respectively.  

\begin{figure}[h]
\includegraphics*[scale=0.55]{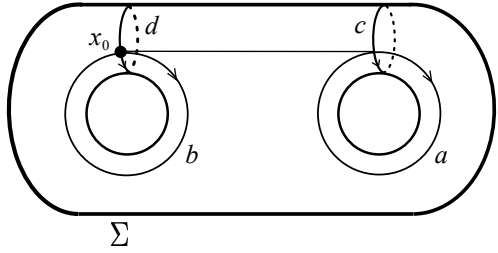}
\caption{The Heegaard surface $\Sigma$ and the standard generators $a, b, c, d$ of $\pi_1(\Sigma)$.}
\label{fig:heegaard}
\end{figure}

Let us take the circles $C_1$, $C_2$, $C_3$ and $C_4$ on $\Sigma$ as shown in Figure \ref{fig:heegaard2} 
so that these curves are disjoint from the base point $x_0$ of the fundamental group $\pi_1(\Sigma)$.

\begin{figure}[h]
\includegraphics*[scale=0.5]{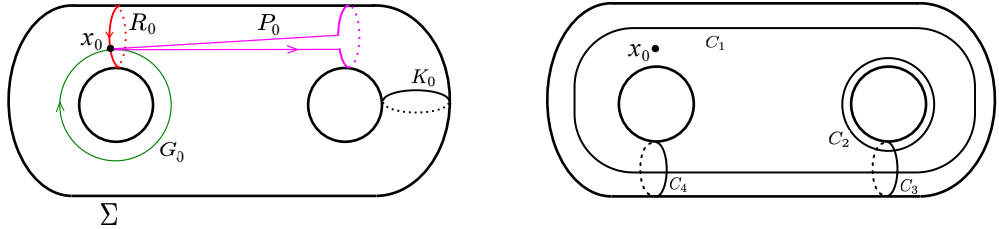}
\caption{$K_0$ and the generators $[G_0]$, $[R_0]$, $[P_0]$ of $\pi_1(\Sigma -K_0)$ (Left); 
the circles $C_1$, $C_2$, $C_3$ and $C_4$ (Right).}
\label{fig:heegaard2}
\end{figure}

Let $\varphi \colon \Sigma \to \Sigma$ be an automorphism which is obtained by 
composing Dehn twists along these curves in the following order:
\begin{itemize}
\item[(D1)]
2 times along $C_1$ to the left.
\item[(D2)]
$p-2$ times along $C_2$ to the left.
\item[(D3)]
$m$ times along $C_3$ to the right.
\item[(D4)]
Once along $C_2$ to the left.
\item[(D5)]
$s$ times along $C_4$ to the right.
\end{itemize}
We may assume that each Dehn twist fixes a base point $x_0 \in \Sigma$, and hence so does $\varphi$. 
Then $K = \varphi(K_0)$ is our twisted torus knot $K(p(m+1)+1,pm+1; 2, s)$; 
we denote $G = \varphi(G_0),\ R = \varphi(R_0)$ and $P = \varphi(P_0)$. 
They represent generators of $\pi_1(\Sigma - K)$. 

To give a presentation of $G(K)$, 
we consider the following decomposition: 
$S^3 - K = (U-K) \cup (V-K)$,\ $(U-K) \cap (V-K) = \Sigma - K$. 
Recall that $\pi_1(U-K) \cong \pi_1(U)$ is generated by $a, b$, 
and $\pi_1(V-K) \cong \pi_1(V)$ is generated by $c, d$. 
We also recall that $\pi_1(\Sigma - K)$ is generated by $[G], [R], [P]$. 

We express $[G]$, $[R]$ and $[P]$ in $\pi_1(\Sigma)$ using the standard generators $a, b, c$ and $d$, 
and then push them into $\pi_1(U) = \pi_1(U-K)$ and $\pi_1(V) = \pi_1(V-K)$.  
To this end, for convenience, 
we collect the effect of the above Dehn twists (D1)--(D5) on generators $a, b, c$ and $d$. 

\begin{table}[h]
\label{effection}
\begin{tabular}{|c|c|c|c|c|}
\hline
         & $a \mapsto$ & $b \mapsto$ & $c \mapsto$ & $d \mapsto$ \\
\hline
$D1$ &  $a$  & $b$ & $c(ab)^2$ & $d(ab)^2$ \\
\hline
$D2$ &  $a$  &  $b$ & $a^{p-2}c$ & $d$  \\
\hline
$D3$  & $ac^m$ & $b$ & $c$ &  $d$   \\
\hline
$D4$  & $a$ & $b$ & $ac$ &  $d$   \\
\hline
$D5$  & $a$ & $d^sb$ & $c$ &  $d$   \\
\hline
\end{tabular}
\end{table}

Recall that $[G] = b,\ [R] = d$ and $[P] = c$. 
Following the table, for $[G] = b$, 
we have:  
\[
[G] = b \mapsto b \mapsto b \mapsto b \mapsto b \mapsto d^sb.
\] 
Thus $[G] = b \in \pi_1(U)$ (putting $d = 1$) 
and $[G] = d^s \in \pi_1(V)$ (putting $b = 1$) . 
Similarly for $[R] = d$, 
we have: 
\[
[R] = d \mapsto d(ab)^2 \mapsto d(ab)^2 \mapsto d\bigl((ac^m)b\bigr)^2 \mapsto d\bigl(a(ac)^mb\bigr)^2 
\mapsto d\bigl(a(ac)^m(d^sb)\bigr)^2.
\] 
Thus $[R] =  a^{m+1}ba^{m+1}b \in \pi_1(U)$ (putting $c = d = 1$) 
and $[R] = dc^md^sc^m d^{s} \in \pi_1(V)$ (putting $a = b = 1$) . 

For $[P] = c$, 
we have: 
\begin{equation*}
\begin{split}
[P] = & c \mapsto c(ab)^2 \mapsto (a^{p-2}c)(ab)^2 \mapsto  (ac^m)^{p-2}c\bigl((ac^m)b\bigr)^2 
\mapsto \bigl(a(ac)^m\bigr)^{p-2}(ac)\bigl(a(ac)^mb\bigr)^2 \\
& \mapsto (a(ac)^m)^{p-2}ac\bigl(a(ac)^m(d^sb)\bigr)^2.
\end{split}
\end{equation*}
Thus $[P] =  a^{(m+1)(p-2)}a(a^{m+1}b)^2 = a^{(p-1)(m+1)+1} b a^{m+1}b \in \pi_1(U)$ (putting $c = d = 1$) 
and $[P] = c^{(p-1)m+1}d^sc^md^s \in \pi_1(V)$ (putting $a = b = 1$) . 

The results are summarized as follows.

\begin{table}[h]
\label{summary}
\begin{tabular}{|c|c|c|}
\hline
 & $\pi_1(U)$  & $\pi_1(V)$ \\
\hline
$[G]$ &   $b$   & $d^s$    \\
\hline
$[R]$ &    $a^{m+1}ba^{m+1}b$  &  $dc^md^sc^m d^{s}$  \\
\hline
$[P]$  & $a^{(p-1)(m+1)+1} b a^{m+1}b$     &    $c^{(p-1)m+1}d^sc^{m}d^s$   \\
\hline
\end{tabular}
\end{table}

By the Seifert-van Kampen Theorem,
$G(K)$ has a presentation
\begin{equation*}
\begin{split}
G=\langle a,b,c,d \mid
& \  b=d^s, 
a^{m+1}ba^{m+1}b=dc^md^sc^m d^{s} \\
&\qquad 
a^{(p-1)(m+1)+1}ba^{m+1}b=c^{(p-1)m+1}d^sc^{m}d^s \rangle.
\end{split}
\end{equation*}
This is equivalent to
\begin{equation*}\label{p1}
\langle a,c,d \mid a^{m+1}d^s a^{m+1}=dc^md^sc^m,
a^{(p-1)(m+1)+1}d^sa^{m+1}=c^{(p-1)m+1}d^sc^m \rangle.
\end{equation*}
The second relation is changed to
\[
a^{(p-2)(m+1)+1 }\cdot a^{m+1}d^s a^{m+1}=c^{(p-1)m+1}d^sc^{m}.
\]
By using the first relation, this gives
\[
a^{(p-2)(m+1)+1}\cdot dc^md^sc^m=c^{(p-1)m+1}d^sc^m.
\]
So, we have
\[
a^{(p-2)(m+1)+1}d=c^{(p-2)m+1}.
\]
By deleting the generator $d$, we have
\[
\begin{split}
\langle a, c \mid\  &
a^{m+1}( a^{-(p-2)(m+1)-1}   c^{(p-2)m+1}  )^s a^{m+1}= \\
&a^{-(p-2)(m+1)-1}  c^{(p-2)m+1} c^m(  a^{-(p-2)(m+1)-1}   c^{(p-2)m+1}  )^s c^{m}
\rangle.
\end{split}
\]
Finally, this is equivalent to
\[
\begin{split}
\langle a, c \mid\  &
a^{(p-1)(m+1)+1}(    a^{-(p-2)(m+1)-1}   c^{(p-2)m+1}  )^s a^{m+1}= \\
&  c^{(p-1)m+1} (  a^{-(p-2)(m+1)-1}   c^{(p-2)m+1}  )^s c^{m}
\rangle
\end{split}
\]
as desired.
\end{proof}

\begin{remark}
\label{genus}
The twisted torus knot $K(p(m+1)+1,pm+1; 2, s)$ \textup{(}$p \ge 2, m, s \ge 1$\textup{)} is the closure of a positive braid 
with braid index $p(m+1)+1$ and word length $(p(m+1)+1 -1)(pm+1) + 2s = p(m+1)(pm+1) + 2s$. 
Hence  the genus of $K(p(m+1)+1,pm+1; 2, s)$ is given by 
$\dfrac{1 - (p(m+1)+1) + p(m+1)(pm+1) + 2s}{2} = \dfrac{p^2m(m+1)}{2} + s$; see \cite{Stallings}. 
\end{remark}

\begin{theorem}\label{thm:pm}
Let $p\ge 2$ and $m\ge 1$.
The knot group $G$ of the twisted torus knot $K(p(m+1)+1,pm+1; 2,1)$ admits
a generalized torsion element.
\end{theorem}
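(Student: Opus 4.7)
The plan is to specialize the one-relator presentation \eqref{presentation_K(p(m+1)+1,pm+1; 2, s)} to $s=1$ and then derive from it a relation of the form $[z,w(z^{\varepsilon_z},a^{\varepsilon_a})]=1$ to which Proposition~\ref{w_x} applies. Setting $s=1$ collapses the factor $(a^{-(p-2)(m+1)-1}c^{(p-2)m+1})^{s}$ to a single bracket, and after cancelling common powers of $a$ on one side and $c$ on the other, the defining relation reduces to
\[
a^{m+1}\,c^{(p-2)m+1}\,a^{m+1} \;=\; c^{(p-1)m+1}\,a^{-(p-2)(m+1)-1}\,c^{(p-1)m+1}.
\]
Writing the relator as $R=a^{m+1}c^{(p-2)m+1}a^{m+1}c^{-(p-1)m-1}a^{(p-2)(m+1)+1}c^{-(p-1)m-1}$, every $a$-exponent is positive but the $c$-exponents have mixed signs, and the exponent sums of $R$ are $(p(m+1)+1,-(pm+1))$, both nonzero. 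Hence $R$ itself cannot be a commutator, and the desired commutator identity must be derived as a consequence of $R=1$ rather than read off directly from it.

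To do so I would imitate the classical torus-knot strategy: from $R=1$ solve for one of the ``pure'' powers $c^{(p-1)m+1}$ or $a^{(p-1)(m+1)+1}$ in terms of the other generators, with the aim of expressing that power as a sign-consistent word $W$ in $a$ and $c$. Because such a power automatically commutes with its own generator, one then has $[c,W]=1$ (respectively $[a,W]=1$) in $G$, so Proposition~\ref{w_x} decomposes this commutator into a product of positive conjugates of $[c,a]$ and thereby exhibits $[c,a]$ as a generalized torsion element, modulo its nontriviality. Concretely, rearrangement gives $c^{(p-1)m+1} = a^{m+1}c^{(p-2)m+1}a^{m+1}c^{-(p-1)m-1}a^{(p-2)(m+1)+1}$, which still carries the offending factor $c^{-(p-1)m-1}$; the decisive manoeuvre is to use cyclic conjugation together with the split $c^{(p-2)m+1}=c^{(p-1)m+1}\cdot c^{-m}$ to absorb that factor against one of the $c^{(p-1)m+1}$-blocks and recover a fully sign-consistent expression.

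For nontriviality of $[c,a]$, Remark~\ref{genus} gives that the genus of $K$ equals $\tfrac{p^{2}m(m+1)}{2}+1\ge 3$, so $K$ is nontrivial and $G$ is nonabelian; since $a$ and $c$ generate $G$, this forces $[c,a]\neq 1$. The main obstacle I anticipate is the algebraic manipulation of the middle paragraph: teasing out a sign-consistent commutator identity from the mixed-sign relator $R$, despite $R$ itself not being a commutator, is precisely the nontrivial calculation, and it is here that the palindromic shape of the simplified relation (two $a$-blocks sandwiching a $c$-block on the left, two $c$-blocks sandwiching an $a$-block on the right) must be exploited to make the signs cooperate.
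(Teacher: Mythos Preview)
Your overall strategy---reduce to $s=1$, exhibit a power of one generator as a sign-consistent word $w$ in both generators, then apply Proposition~\ref{w_x} to $[c,w]=1$---is exactly the paper's approach, and your nontriviality argument via Remark~\ref{genus} is fine. But you make the algebra harder than it needs to be by solving for the wrong power. From the simplified relation
\[
a^{m+1}\,c^{(p-2)m+1}\,a^{m+1} \;=\; c^{(p-1)m+1}\,a^{-(p-2)(m+1)-1}\,c^{(p-1)m+1},
\]
isolate the \emph{middle} block $c^{(p-2)m+1}$ on the left, not the outer block $c^{(p-1)m+1}$ on the right:
\[
c^{(p-2)m+1}=a^{-(m+1)}\,c^{(p-1)m+1}\,a^{-(p-2)(m+1)-1}\,c^{(p-1)m+1}\,a^{-(m+1)}.
\]
The right-hand side is already a word $w(a^{-1},c)$ with every $a$-exponent negative and every $c$-exponent positive, so $[c,w(a^{-1},c)]=[c,c^{(p-2)m+1}]=1$ and Proposition~\ref{w_x} applies directly to give $[c,a^{-1}]$ as a generalized torsion element.

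By contrast, your choice to solve for $c^{(p-1)m+1}$ produces the mixed-sign expression you noted, and the ``decisive manoeuvre'' you sketch (the split $c^{(p-2)m+1}=c^{(p-1)m+1}c^{-m}$ together with cyclic conjugation) does not visibly eliminate the negative $c$-block; after substitution one still has $c^{(p-1)m+1}=a^{m+1}c^{-m}c^{(p-1)m+1}a^{m+1}c^{-(p-1)m-1}a^{(p-2)(m+1)+1}$, with the same quantity on both sides and negative $c$-powers persisting. So as written your middle paragraph is a plan rather than an argument, and the actual fix is the one-line rearrangement above.
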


\begin{proof}
Putting $s=1$ in (\ref{presentation_K(p(m+1)+1,pm+1; 2, s)}) and rewriting the relation, 
$G$ has a presentation
\[
\begin{split}
G=\langle a,c \mid\ &
a^{(p-1)(m+1)+1}a^{-(p-2)(m+1)-1}c^{(p-2)m+1}a^{m+1}=\\
& c^{(p-1)m+1}a^{-(p-2)(m+1)-1}c^{(p-2)m+1}c^m
\rangle \\
=
\langle a,c \mid\ &
a^{m+1}c^{(p-2)m+1}a^{m+1}=
 c^{(p-1)m+1}a^{-(p-2)(m+1)-1}c^{(p-1)m+1}
\rangle.
\end{split}
\]
The relation is changed to
\[
c^{(p-2)m+1}=a^{-(m+1)}c^{(p-1)m+1}a^{-(p-2)(m+1)-1}c^{(p-1)m+1}a^{-(m+1)}.
\]


Let $w(a^{-1},c)$ be the right hand side of this relation.
Then the commutator $[c,w(a^{-1},c)]$ is the identity, 
and it is decomposed into a product of conjugates of $[c,a^{-1}]$ (Proposition~\ref{w_x}).

Once we know that $[c,a^{-1}]\ne 1$, this give a generalized torsion element in $G$.
If $[c,a^{-1}]=1$ in $G$, then $G$ would be abelian.
Hence $K$ must be trivial.
However, this is impossible, because $K$ is a closure of a positive braid, so a
fibered knot of genus $p^2m(m+1)/2+1$.
\end{proof}

In Theorem \ref{thm:pm}, for a technical reason, 
we assumed $s = 1$, while we may vary $p$ and $m$. 
On the other hand, if we put $p=2$ and $m=1$, 
then we may vary the twisting parameter $s \ge 0$. 

\begin{theorem}\label{thm:532s}
\label{K(5,3; 2,s)}
The knot group $G$ of the twisted torus knot $K(5,3; 2,s)$ admits
a generalized torsion element for any $s\ge 0$. 
\end{theorem}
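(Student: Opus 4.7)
The plan is to follow the blueprint of Theorem~\ref{thm:pm}: from the preceding lemma's presentation of $G$, produce a commutator identity $[c, w] = 1$ in $G$ with $w = w(a^{-1}, c)$ a word in $a^{-1}$ and $c$ containing at least one letter $a^{-1}$, and then invoke Proposition~\ref{w_x} to decompose $[c, w]$ as a non-empty product of conjugates of $[c, a^{-1}]$. Specializing the presentation at $p = 2$, $m = 1$ yields
\[
G \;=\; \langle\, a, c \mid a^{3}(a^{-1}c)^{s}a^{2} \,=\, c^{2}(a^{-1}c)^{s}c \,\rangle,
\]
and the candidate generalized torsion element is $[c, a^{-1}]$.

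I would first dispose of $s = 0$: the relation collapses to $a^{5} = c^{3}$, so $G \cong G(T_{5, 3})$, and the identity $[c, a^{-5}] = [c, c^{-3}] = 1$ combined with Proposition~\ref{w_x} produces a product of five conjugates of $[c, a^{-1}]$ equal to the identity. For $s \ge 1$, multiplying the defining relation on both sides by $a^{-2}$ and using the expansions $(a^{-1}c)^{s} = a^{-1}c \cdot (a^{-1}c)^{s-1}$ on the left and $(a^{-1}c)^{s}c = (a^{-1}c)^{s-1} a^{-1} c^{2}$ on the right, I rewrite the relation as
\[
c(a^{-1}c)^{s-1} \;=\; a^{-2} c^{2} (a^{-1}c)^{s-1} a^{-1} c^{2} a^{-2},
\]
with both sides words in $a^{-1}$ and $c$. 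When $s = 1$ this reads $c = a^{-2}c^{2}a^{-1}c^{2}a^{-2}$, so setting $w := a^{-2}c^{2}a^{-1}c^{2}a^{-2}$ one has $[c, w] = [c, c] = 1$, and Proposition~\ref{w_x} delivers five conjugates of $[c, a^{-1}]$ whose product is the identity---precisely as in Theorem~\ref{thm:pm}.

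The substantial point is the case $s \ge 2$, where $c(a^{-1}c)^{s-1}$ is no longer a pure power of $c$. My plan is to exploit the two decompositions provided by Proposition~\ref{w_x}: the commutator $[c, c(a^{-1}c)^{s-1}]$ reduces via Lemma~\ref{identity} to $[c, (a^{-1}c)^{s-1}]$, a product of $s - 1$ conjugates of $[c, a^{-1}]$; the commutator $[c, a^{-2} c^{2}(a^{-1}c)^{s-1} a^{-1} c^{2} a^{-2}]$ is a product of $s + 4$ such conjugates; and the two agree in $G$ because the arguments are equal by the rearranged relation. The principal technical step---and the main obstacle---is to iterate the rearranged relation, substituting the inner $(a^{-1}c)^{s-1}$ on the right-hand side back in using the relation itself, and thereby refine this coincidence into a genuine identity $[c, w] = 1$ with $w$ a non-trivial word in $a^{-1}$ and $c$, bringing the $s \ge 2$ case under the same mechanism as $s = 1$.

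It remains to check that $[c, a^{-1}] \ne 1$. By Remark~\ref{genus}, $K(5, 3; 2, s)$ is the closure of a positive braid and hence a fibered knot of genus $4 + s \ge 4$; in particular $K$ is non-trivial, $G$ is non-abelian, and $[c, a^{-1}] \ne 1$. Consequently $[c, a^{-1}]$ is a generalized torsion element of $G = G(K(5,3;2,s))$ for every $s \ge 0$.
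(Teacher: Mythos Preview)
Your treatment of $s=0$ and $s=1$ is fine, and your rearrangement
\[
c(a^{-1}c)^{s-1}\;=\;a^{-2}c^{2}(a^{-1}c)^{s-1}a^{-1}c^{2}a^{-2}
\]
is correct. The gap is exactly where you flag it: for $s\ge 2$ the proposed ``iteration'' does not do what you need. Writing $v=c(a^{-1}c)^{s-1}$, the relation becomes $v=(a^{-2}c)\,v\,(a^{-1}c^{2}a^{-2})$, and substituting it into itself only yields $v=(a^{-2}c)^{n}\,v\,(a^{-1}c^{2}a^{-2})^{n}$ for every $n$; the block $v$ never disappears, so you never reach an identity of the shape $c^{k}=w(a^{-1},c)$. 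Your alternative observation---that $[c,v]$ equals both a product of $s-1$ conjugates of $[c,a^{-1}]$ and a product of $s+4$ such conjugates---does not help either: equating the two products and moving one side across introduces conjugates of $[c,a^{-1}]^{-1}=[a^{-1},c]$, not of $[c,a^{-1}]$, so you do not obtain a product of conjugates of a single element equal to~$1$.

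The paper sidesteps this obstruction by abandoning the generators $a,c$ altogether. A sequence of Tietze transformations (introduce $b=a^{-1}c$, then $x=ab^{s-1}$, then $y=bxb$) brings the presentation to
\[
G\;=\;\langle\, b,y\mid y^{2}=b^{-1}yb^{-(s+1)}yb^{-1}yb^{-(s+1)}yb^{-1}\,\rangle,
\]
in which the right-hand side is a word $w(b^{-1},y)$ for \emph{every} $s\ge 0$. Since $y^{2}=w(b^{-1},y)$ one has $[y,w(b^{-1},y)]=[y,y^{2}]=1$, and Proposition~\ref{w_x} then gives $[y,b^{-1}]$ as a generalized torsion element uniformly in $s$. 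The moral is that the ``positivity'' needed for Proposition~\ref{w_x} is not visible in the original generators once $s\ge 2$; a change of generators is the missing idea.
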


\begin{proof}
Applying Tietze transformations to the presentation (\ref{presentation_K(p(m+1)+1,pm+1; 2, s)}), we have: 
\begin{align*}
G&=\langle a, c \mid\  
a^{3}(    a^{-1}   c  )^s a^{2}=
 c^{2} (  a^{-1}   c  )^s c  \rangle \\
&=\langle a, c, b \mid\  
a^{3}b^s a^{2}=
 c^{2} b^s c, b=a^{-1}c  \rangle \\
 &=\langle a,  b \mid\  
a^{2}b^s a^{2}=bab^{s+1}ab  \rangle \\
&=\langle a, b, x \mid a^{2}b^sa^{2}=ba b^{s+1} ab, x=ab^{s-1}\rangle \\
&=\langle b, x \mid (xb^{-(s-1)})^2 b^s (xb^{-(s-1)})^2=bxb^{-(s-1)}b^{s+1}xb^{-(s-1)}b\rangle\\
&=\langle b, x \mid xb^{-(s-1)}xbxb^{-(s-1)}x=bxb^2xb \rangle\\
&=\langle b, x, y \mid xb^{-(s-1)}xbxb^{-(s-1)}x=bxb^2xb, y=bxb \rangle\\
&=\langle b,x,y \mid xb^{-(s-1)}xbxb^{-(s-1)}x=y^2, y=bxb \rangle\\
&=\langle b, y \mid y^2=(b^{-1}yb^{-1}) b^{-(s-1)} (b^{-1}y b^{-1}) b  ( b^{-1}yb^{-1} )b^{-(s-1)}  (b^{-1}yb^{-1}) \rangle\\
&=\langle b, y \mid y^2=b^{-1}yb^{-(s+1)} y b^{-1}y b^{-(s+1)}y b^{-1}\rangle.
\end{align*}

Let $w(b^{-1},y)=b^{-1}yb^{-(s+1)} y b^{-1}y b^{-(s+1)}y b^{-1}$.
Then the relation says $y^2=w(b^{-1},y)$.
Consider the commutator $[y,w(b^{-1},y)]$, which is the identity.
However, Proposition~\ref{w_x} shows that this commutator can be decomposed into a product of conjugates of 
$[y, b^{-1}]$ only.
Here $[y, b^{-1}]\ne 1$, 
for otherwise $K(5,3; 2,s)$ would be trivial.
However, $K(5,3;2,s)$ is a fibered knot of genus $s+4$ as in the proof of Theorem \ref{thm:pm}.
Thus $[y,w(b^{-1},y)]$ is a generalized torsion element in $G$.
\end{proof}

We should remark that the twisted torus knot $K(5,3;2,s)$ is the $(-2,3,2s+5)$-pretzel knot.
In particular, 
$(-2,3,7)$-pretzel knot admits a generalized torsion element in its knot group. 
Also, as mentioned in the proof of Theorem \ref{thm:532s},
this knot has genus $s+4$.
Furthermore, this is hyperbolic, except the torus knot $K(5,3;2,0)$ \cite{K}.
Thus these pretzel knots give another examples of Corollary \ref{g-torsion_large_genus}
realizing arbitrarily high genus.

Also, Theorem \ref{thm:532s} implies the following.

\begin{cor:pretzel}
The knot group of the pretzel knot of type $(-2,3,2n+5)$ is not bi-orderable for $n\ge 0$. 
\end{cor:pretzel}


Since this pretzel knot is fibered,  if the Alexander polynomial has no positive real root,
then the knot group is not bi-orderable \cite{CR}.
In fact, the Alexander polynomial of the pretzel knot of type $(-2,3,2n+5)$ is
\[
\Delta(t)=t^{2n+8}-t^{2n+7}+(t^{2n+5}-t^{2n+4}+t^{2n+3}-t^{2n+2}+\dots-t^4+t^3)-t+1
\]
as given in \cite{H}.
It is easy to see that there is no positive real root 
(consider the cases $t\ge 1$ and $0\le t<1$ ).
Hence, this gives another proof of Corollary \ref{pretzelknot}. 
(Note that the absence of bi-ordering does not imply the existence of generalized torsion elements.)

Recently,  Johnson \cite{J}  examines the bi-orderability for genus one pretzel knots.


\begin{remark}
\label{s>0}
It should be interesting to compare twist families given in Theorem~\ref{twist_T} and 
that given in Theorem~\ref{K(5,3; 2,s)}. 
In the former we may twist $K_q$ about $c_n$ in both positive and negative direction to obtain knots with generalized torsion. 
On the contrary, 
the latter family forces us to perform only positive twisting for technical reason. 
We wonder if this condition is necessary in the latter.  
In other words, if $s < 0$, then does the knot obtained by $s$ twisting have bi-ordering? 
\end{remark}

\section*{Acknowledgements}
We would like to thank to Eiko Kin for helpful conversation.
The first named author has been partially supported by JSPS KAKENHI Grant Number 19K03502 and Joint Research Grant of Institute of Natural Sciences at Nihon University for 2021.
The second named author has been supported by JSPS KAKENHI Grant Number 20K03587.

\bigskip

\end{document}